\newtheorem{mthm}{Theorem}
\newtheorem{thm}{Theorem}[section]
 \newtheorem{lem}[thm]{Lemma}
 \newtheorem{prop}[thm]{Proposition}
 \theoremstyle{definition}
 \newtheorem{defn}[thm]{Definition}
 \theoremstyle{remark}
 \newtheorem{ex}[thm]{Example}
 \newcommand{\C}{\mathbb{C} }
 \newcommand{\CC}{\overline{\C} }
 \newcommand{\D}{\mathbb{D} }
 \newcommand{\Z}{\mathbb{Z} }
 \newcommand{\A}{\mathcal{A} }
 \newcommand{\Perp}{\perp \! \! \! \perp}
\begin{document}
 
\title{Thurston equivalence for rational maps with clusters}
\author{Thomas Sharland}
\address{University of Warwick, Coventry, CV4 7AL, UK}
\email{tomkhfc@hotmail.com}
\date{\today}
\subjclass[2010]{Primary 37F10}

\begin{abstract}
 We investigate rational maps with period one and two cluster cycles. Given the definition of a cluster, we show that, in the case where the degree is $d$ and the cluster is fixed, the Thurston class of a rational map is fixed by the combinatorial rotation number $\rho$ and the critical displacement $\delta$ of the cluster cycle. The same result will also be proved in the case that the rational map is quadratic and has a period two cluster cycle, but that the statement is no longer true in the higher degree case.
\end{abstract}
\maketitle

\tableofcontents

\section{Introduction}

%The study of complex dynamics was introduced by Gaston Julia and Pierre Fatou in the early part of the 20th Century. The subject then lay relatively dormant until the 1980s, when it was revived by Adrien Douady and John Hubb 

Complex dynamics emergence as a popular subject for mathematical research came about as a result of the rediscovery of the early 20th Century works of Fatou \cite{Fatou:1919,Fatou:1920} and Julia \cite{Julia:1918}. After a comparably quiet period, the subject was given a new life in the 1980s. Perhaps the most notable contributions were supplied by A.~Douady and J.~Hubbard, whose ``Orsay lecture notes'' \cite{DouadyHubbard:Orsay1,DouadyHubbard:Orsay2} provide a number of enlightening and amazing results about the behaviour of such systems. Since then, the study of complex dynamical systems has grown enormously and is now a very fruitful area for research.

 In dynamical systems, one often wants to be able to understand the systems one works with up to some form of equivalence. The study of complex dynamics on the Riemann sphere is no different, and we are fortunate that there is a powerful criterion, due to Thurston, that tells us whether or not two rational maps on the sphere are equivalent. However, despite its simplicity and power, the criterion suffers from the fact it can, in practice, be very difficult to check. In this paper, we investigate a specific class of maps; those bicritical rational maps with periodic cluster cycles of period one or period two, and show that in this cases, the application of the Thurston criterion is simple and allows a classification of such maps. This paper was created from results from the author's PhD thesis at the University of Warwick \cite{Mythesis}. A second paper \cite{Matingspaper}, focusing on matings, is in preparation.

\subsection{Definitions}

Let $f \colon \CC \to \CC$ be a rational map on the Riemann sphere, of degree at least 2. The Julia set $J(f)$ will be the closure of the set of repelling periodic points of $f$, and the Fatou set is the set $F(f) = \CC \setminus J(f)$. The connected components of $F(f)$ are called Fatou components. In the case where the critical orbits are periodic, we call the immediate basins of the (super)attracting orbit critical orbit Fatou components. These components will play an important part in the definition of clustering, defined below.

We first discuss what it will mean for two rational maps to be equivalent. We use the standard definition of equivalence, which is Thurston equivalence. We will be dealing with postcritically finite rational maps. To define this, let $\Omega_F$ be the set of critical points of $F$. Then we define the postcritical set to be
\[
 P_{F} := \bigcup_{n>0} F^{\circ n} (\Omega_{F})
\]
and say the map $F$ is postcritically finite precisely when $|P_F|<\infty$.

\begin{defn}
Two postcritically finite rational maps $F$ and $G$ with labelled critical points will be called (Thurston) equivalent if there exists orientation preserving homeomorphisms $\phi_{1}, \phi_{2} \colon S^{2} \to S^{2}$ such that
   \begin{itemize}
     \item{$\phi_{1} |_{P_{F}} = \phi_{2} |_{P_{F}}$}
     \item{The following diagram commutes.
        \[
             \xymatrix{       (S^{2}, P_{F}) \ar[rr]^{\phi_{1}} \ar[dd]_{F}    && (S^{2}, P_{G}) \ar[dd]^{G}
            \\ \\
                (S^{2}, P_{F}) \ar[rr]_{\phi_{2}}                       && (S^{2},
                P_{G})}
        \]}
    \item{$\phi_{1}$ and $\phi_{2}$ are isotopic via homeomorphisms $\phi_{t}$, $t \in [0,1]$ satisfying $\phi_{0} |_{P_{F}} = \phi_{t} |_{P_{F}} = \phi_{1} |_{P_{F}}$ for each $t \in [0,1].$}
        \end{itemize}
\end{defn}

By a result of Thurston (for a proof, see \cite{DouadyHubbard:1993}), if two rational maps are equivalent then they are the same map up to conjugacy by a M\"{o}bius transformation. More precisely, the full theorem shows that each equivalence class of branched coverings of the sphere contains at most one rational map, up to M\"{o}bius conjugacy. Since we will be dealing with bicritical rational maps, the criterion for a branched covering to be equivalent to a rational map is a lot simpler (see \cite{TanLei:1990}). This is because, instead of needing to find Thurston obstructions, one can restrict the search to looking for Levy cycles. Let $\Gamma = \{ \gamma_{1}, \gamma_{2}, \ldots , \gamma_{n} \}$ be a collection of curves in $S^{2}$. If the $\gamma_{i} \in \Gamma$ are simple, closed, non-peripheral\footnote{Non-peripheral means $\gamma \cap P_{F} = \varnothing$ and each connected component of $S^{2} \setminus \gamma$ contains at least two points of $P_{F}$.}, disjoint and non-homotopic relative to $P_F$ then we say $\Gamma$ is a \emph{multicurve}.

\begin{defn}
  A multicurve $\Gamma = \{ \gamma_{1}, \gamma_{2}, \ldots , \gamma_{n} \}$ is a \emph{Levy cycle} if for each $i =1,\ldots,n$, the curve $\gamma_{i-1}$ (or $\gamma_{n}$ if $i = 1$) is homotopic to some component $\gamma_{i}'$ of $F^{-1}(\gamma_{i})$ (rel $P_{F}$) and the map $F \colon \gamma_{i}' \to \gamma_{i}$ is a homeomorphism.
\end{defn}

\begin{prop}[\cite{TanLei:1990}]
 Suppose $F$ is a bicritical branched covering. Then $F$ is (Thurston) equivalent to a rational map if and only if it does not have a Levy cycle.
\end{prop}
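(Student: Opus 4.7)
The plan is to prove both implications via Thurston's characterization theorem \cite{DouadyHubbard:1993}: a postcritically finite branched covering (with hyperbolic orbifold) is Thurston equivalent to a rational map if and only if it admits no \emph{Thurston obstruction}, namely no $F$-stable multicurve $\Gamma$ whose associated Thurston transition matrix $M_\Gamma$ has spectral radius at least $1$. Under this criterion it suffices to show that, for a bicritical covering, the existence of a Levy cycle is equivalent to the existence of a Thurston obstruction.

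For the forward direction I would observe that any Levy cycle is automatically a Thurston obstruction. Given a Levy cycle $\Gamma=\{\gamma_1,\ldots,\gamma_n\}$, the definition supplies, for each $i$, a component $\gamma_i'$ of $F^{-1}(\gamma_i)$ homotopic to $\gamma_{i-1}$ rel $P_F$ with $F|_{\gamma_i'}$ a homeomorphism. The transition matrix $M_\Gamma$ therefore contains a cyclic permutation block with entries equal to $1$, so its spectral radius is at least $1$. Thurston's theorem then forbids $F$ from being Thurston equivalent to a rational map; the contrapositive is the forward implication.

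For the backward direction I would argue contrapositively: starting from a Thurston obstruction $\Gamma$ of a bicritical covering $F$, I would extract a Levy cycle from it. The bicritical hypothesis severely restricts the pullback behaviour, since any non-peripheral component of the preimage of a non-peripheral curve is either a degree-one homeomorphic lift or a branched lift whose degree coincides with the local degree of the corresponding critical point. Passing to iterates of $F$ applied to $\Gamma$ and invoking a Perron--Frobenius argument on $M_\Gamma$, I would isolate a recurrent subclass of curves on which the leading eigenvalue of $M_\Gamma$ is realised. The bicritical constraint then forces every transition within this recurrent class to be of homeomorphism type, since any branched lift contributes an entry strictly less than $1$ to the matrix and cannot support an eigenvalue of modulus $1$ along a recurrent cycle. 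The recurrent subclass therefore satisfies the definition of a Levy cycle.

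The main obstacle lies in this final extraction: one must verify rigorously that any non-homeomorphic lift inside the recurrent class destroys the leading eigenvalue, and that the cyclic arrangement produced by the Perron eigenvector respects the homotopy, disjointness, and non-peripherality conditions required of a multicurve. This combinatorial-topological bookkeeping, which exploits bicriticality in an essential way and fails in the tricritical case, is the substance of Tan Lei's original argument in \cite{TanLei:1990}.
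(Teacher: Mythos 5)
The paper does not actually prove this proposition; it is quoted from \cite{TanLei:1990}, so there is no internal argument to compare yours against. Judged on its own terms, your forward direction is essentially standard, though note one small gap: a Levy cycle need not be an $F$-stable multicurve, and Thurston's criterion as usually stated quantifies over stable multicurves, so you must either invoke the known lemma that a Levy cycle can be completed to (or directly yields) a Thurston obstruction, or argue directly that a postcritically finite rational map admits no Levy cycle (e.g.\ via expansion/hyperbolic length of the curves pulled back homeomorphically). This is fixable with a citation.

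The backward direction, however, contains a genuine gap at the decisive step. Your claim that ``any branched lift contributes an entry strictly less than $1$ to the matrix and cannot support an eigenvalue of modulus $1$ along a recurrent cycle'' is not a valid deduction: a curve $\gamma_j$ may have several degree-one preimage components that are all non-peripheral and homotopic to the \emph{same} curve $\gamma_i$ of $\Gamma$, producing a matrix entry $\geq 2$, and such entries can compensate an entry $1/d$ coming from a branched lift along the same cycle. Concretely, in degree $2$ a two-cycle whose transitions have entries $2$ and $1/2$ has leading eigenvalue $1$, yet one of the two transitions is not a homeomorphism, so the recurrent class realizing the spectral radius need not consist of homeomorphic lifts. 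For bicritical maps the relevant structural fact is that the preimage of a simple closed curve is either $d$ curves each mapped by degree one (critical values on the same side) or a single curve mapped by degree $d$ (critical values separated); the real work in Tan Lei's theorem is to use this dichotomy, together with the positions of the postcritical points, to show that configurations like the one above either cannot persist or still force a Levy cycle elsewhere. Your sketch explicitly defers this ``bookkeeping'' to \cite{TanLei:1990}, but since that is precisely the content of the statement being proved, the proposal as written does not establish the implication from Thurston obstruction to Levy cycle.
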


The concept of a cluster cycle for a bicritical rational map does not appear to be in the literature. Informally, it is the condition that the critical orbit Fatou components meet at a common boundary point, which is a repelling periodic point. This common boundary point will be called the cluster point. We give a more formal definition below. Recall that a bicritical rational map is said to be of type $D$ if the two critical points belong to the attracting basins of two disjoint periodic orbits.

\begin{defn}
	Let $F \colon \CC \to \CC$ be a bicritical rational map of type $D$ with the property that the two critical orbits belong to superattracting orbits with the same period. Then a cluster point for $F$ is a point in $J(F)$ which is the endpoint of the angle $0$ internal rays of at least one critical orbit Fatou component from each of the two critical cycles.

We will define a cluster to be the union of the cluster point and the Fatou components meeting at it. The period of the cluster will be the period of the cluster point.
	
The star of a cluster will be the union of the cluster point and the associated $0$ internal rays, including the points on the critical orbit.
\end{defn}

Since this definition is new, we give a simple example.

\begin{ex}
	Consider the mating of Douady's rabbit and the aeroplane polynomial (both polynomials have a superattracting orbit of period 3, and are post-critically finite). The result of this mating can be seen in Figure~\ref{f:2critclust}. This map has a fixed cluster point. In the figure, the basins of one of the critical points is white, the other basin is black. The cluster point is in the bottom right of the picture; its pre-image in the top left.
	
\begin{figure}[ht]
    \begin{center}
    \includegraphics[width=0.9\textwidth]{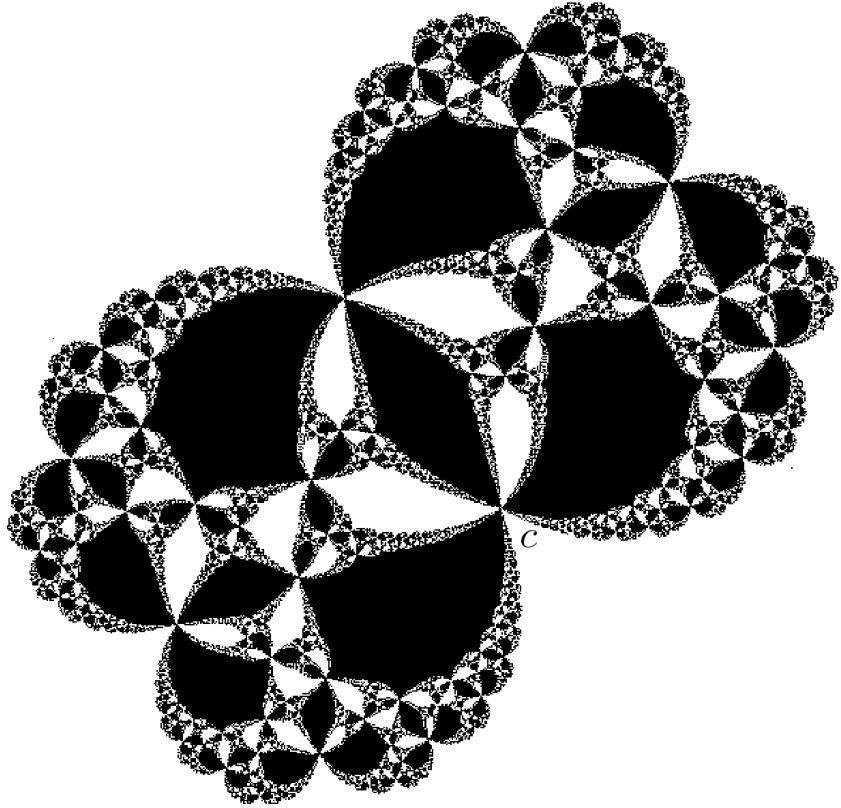}
    \end{center}
    \caption{An example of a map with a cluster point, labelled with $c$.}
    \label{f:2critclust}
\end{figure}	
\end{ex}

It is clear that a cluster will be invariant under the first return map, and hence it is possible to define the combinatorial rotation number of a cluster cycle in the usual way. More precisely, we note that the first return map to the cluster is actually a homeomorphism; this is because the $0$-internal rays map homeomorphically onto their images (which are also $0$-internal rays) under the rational map.

\begin{defn}\label{d:CRN}
    Let $F \colon \CC \to \CC$ be a rational map of type $D$. Let $c$ be a cluster point of period $n$ of $F$. Then the combinatorial rotation number is defined as follows. The first return map, $F^{\circ n}$, maps the star of the cluster, $X_{F}$ to itself. Label the arms of the star (the $0$-internal rays) which belong to one of the critical orbits (it does not matter which) cyclically in anticlockwise order by $\ell_1$, $\ell_2, \ldots \ell_n$ (the initial choice of $\ell_1$ is not important). Then for each $k$, there exists $p$ such that $F^{\circ n}$ maps $\ell_{k}$ to $\ell_{k+p}$, subscripts taken modulo $n$. We then say the combinatorial rotation number is $\rho = \rho(F) = p/n$. 
\end{defn}

There is another piece of data that we will require in this paper. This is the notion of the critical displacement. The definition we will use is different depending on whether we are discussing the period one or period two case. First of all, we need a lemma. The author is grateful to Mary Rees for suggesting the proof of the following result.

\begin{lem}\label{p:Marysresult}
	There does not exist a rational map $F$ with a period 2 cluster cycle such that the critical points are in the same cluster.
\end{lem}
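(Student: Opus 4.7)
The plan is to suppose for contradiction that such a rational map $F$ exists, and then to exhibit a Levy fixed curve for $F$; since $F$ is rational, this would contradict the Tan Lei proposition stated above. Write the period-two cluster cycle as $\{C,C'\}$ with $F(C)=C'$, and (without loss of generality) suppose both critical points $\omega_1,\omega_2$ lie in $C$, so that both critical values $F(\omega_1),F(\omega_2)$ lie in $F(C)=C'$. By the definition of a cluster, each of $C$ and $C'$ contains at least one critical-orbit Fatou component from each of the two critical orbits; consequently the postcritical set splits as $P_F=(P_F\cap C)\sqcup(P_F\cap C')$, with each piece of cardinality at least two.

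Choose a topological disk $D\subset S^2$ containing $C$ (hence all of $P_F\cap C$) but disjoint from $C'$, and set $\alpha=\partial D$. This is a simple closed curve in $S^2\setminus P_F$, non-peripheral by the preceding count. Since $D$ contains no critical values, the restriction $F\colon F^{-1}(D)\to D$ is an unramified covering of the disk $D$; hence $F^{-1}(D)$ is a disjoint union of $d$ topological disks, each mapped homeomorphically onto $D$. Because $C'$ contains no critical points of $F$, the map $F$ is a local homeomorphism on a neighborhood of $C'$, and as $F(C')=C$ one of the preimage disks --- call it $E$ --- is a neighborhood of $C'$. For each $p\in P_F\cap C$ a backwards step along its critical orbit produces the unique preimage of $p$ lying in $P_F$, and this preimage lies in $C'\subset E$; the remaining $d-1$ preimages of $p$ are not postcritical, and are distributed among the other $d-1$ preimage disks. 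Hence $E\cap P_F=P_F\cap C'$, and the other $d-1$ preimage disks contain no postcritical points at all.

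The boundary $\partial E\subset F^{-1}(\alpha)$ is therefore a simple closed curve separating $P_F$ into $P_F\cap C'$ on the $E$-side and $P_F\cap C$ on the other, exactly the bipartition produced by $\alpha$; thus $\partial E$ is homotopic to $\alpha$ rel $P_F$. Moreover $F$ restricts to a homeomorphism $\partial E\to\alpha$ (since $F\colon E\to D$ is a homeomorphism). Consequently $\{\alpha\}$ is a Levy cycle of length one, giving the desired contradiction. The step requiring the most care is the identification of $E$ as a neighborhood of $C'$ capturing exactly $P_F\cap C'$ and nothing else: the fact that $C'$ is critical-point-free is what makes $F$ a local homeomorphism around $C'$, and the bijection between postcritical preimages in $C'$ and postcritical points in $C$ --- obtained by stepping once backwards along each critical orbit --- is what pins down the distribution of postcritical points among the $d$ preimage disks.
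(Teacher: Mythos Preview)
Your argument is correct and follows essentially the same route as the paper's proof: both take a simple closed curve separating the cluster containing the critical points from the other cluster, observe that the enclosed disk contains no critical values so its preimage is $d$ disjoint disks, identify the one preimage disk containing the second cluster, and conclude that its boundary gives a Levy fixed curve. The paper phrases the curve as the boundary of a tubular neighbourhood of the star $X_0$ and invokes Milnor's Lemma~3.4 to describe $F^{-1}(\gamma)$, whereas you argue the covering-theoretic step directly; this is only a cosmetic difference.

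One small point of phrasing: you write ``choose a topological disk $D$ containing $C$ but disjoint from $C'$''. Since the clusters are unions of Fatou components (not just the stars), there is no \emph{a priori} guarantee that $\overline{C}$ and $\overline{C'}$ are disjoint, so such a $D$ might not exist as stated. What you actually need---and what your subsequent argument uses---is a disk containing the star of $C$ (hence $P_F\cap C$) and disjoint from the star of $C'$ (hence $P_F\cap C'$); the stars are disjoint compact trees, so this is unproblematic. This is exactly the paper's ``tubular neighbourhood of $X_0$''. With that adjustment your proof is complete.
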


\begin{proof}
We begin with some notation. We suppose that $F$ is a branched cover and the critical points are in the same cluster. Denote the critical points by $c_{0}$ and $\tilde{c}_{0}$, and denote $c_{i} = F^{\circ i}(c_{0})$ and $\tilde{c}_{j} = F^{\circ j}(\tilde{c}_{0})$. We will set both critical points to have period $2n$, and so $P_{F} = \{ c_{0},\ldots,c_{2n-1},\tilde{c}_{0},\ldots,\tilde{c}_{2n-1} \}$. Since the critical points are in the same cluster and the clusters are period 2, the set of post-critical points in the first cluster $\mathcal{C}_{0}$ are those of the form $c_{2i}$ and $\tilde{c}_{2j}$ (i.e those points with even index), whilst the remaining post-critical points lie in the second cluster $\mathcal{C}_{1}$. Denote the star of $\mathcal{C}_0$ by $X_0$ and the star of $\mathcal{C}_1$ by $X_1$.
	
Now consider the curve $\gamma$, the boundary of a tubular neighbourhood $U$ of $X_0$. Then $\gamma$ separates the two clusters and in particular is non-peripheral. By Lemma 3.4 in \cite{Milnor:Twocrit}, $F^{-1}(\gamma)$ is made up of $d$ disjoint curves, each of which is the boundary of a tubular neighbourhood of a pre-image star of $X_0$. If the pre-image star is not $X_1$, then the curve in $F^{-1}(\gamma)$ bounding its tubular neighbourhood must be peripheral. The boundary of the tubular neighbourhood of $X_1$, $\gamma' \in F^{-1}(\gamma)$ separates the two clusters, and so is isotopic to $\gamma$. $F : \gamma' \to \gamma$ is a homeomorphism, hence $\Gamma = \{ \gamma \}$ is a Levy cycle and hence such a branched cover cannot be equivalent to a rational map. 
\end{proof}

The reader will have noted that the definition of the critical displacement is dependent on the choice of which critical point is chosen to be the first one. Hence our results us to require us to study rational maps with labelled critical points, so that the choice of the first critical point is known.

Informally, we want the critical diplacement to tell us how far apart the critical points are in the clusters. In light of Lemma~\ref{p:Marysresult}, we will want to define the critical displacement depending on whether we are in the fixed case or the period two case. In the fixed case, this will be easy: we can just calculate the (combinatorial) distance between the two critical points around the cluster. Clearly this is not possible in the period two case (or indeed, in any case where the period of the cluster is greater than 1). So we modify the definition so that we now measure the (combinatorial) distance around the cluster between the first critical point and the image of the second critical point. The formal definitions are below.

\begin{defn}
 Let $F$ be a rational map with a fixed cluster point. Label the endpoints of the star as follows. Let $e_0$ be the first critical point, and label the remaining arms in anticlockwise order by $e_1,e_2,\ldots,e_{2n-1}$. Then the second critical point is one of the $e_j$, and we call $j$ the critical displacement of the cluster of $F$. We denote the critical displacement by $\delta$.
\end{defn}

\begin{defn}
  Let $F$ be a rational map with a period two cluster cycle. Choose one of the critical points to be $c_1$, and label the cluster containing it to be $\mathcal{C}_{1}$. Then (by Lemma~\ref{p:Marysresult}) the other critical point $c_2$ is in the second cluster $\mathcal{C}_2$. We define the critical displacement $\delta$ as follows. Label the arms in the star of $\mathcal{C}_1$, starting with the arm with endpoint $c_1$, in anticlockwise order $\ell_0, \ell_1, \ldots, \ell_{2n-1}$. Then $F(c_2)$ is the endpoint of one of the $\ell_k$. This integer $k$ is the critical displacement.
\end{defn}

The critical displacement will always be an odd integer, since the critical orbit Fatou components alternate around a cluster. That is, a Fatou component provided from the first critical point $c_1$ must be between two Fatou components provided by the orbit of the second critical point $c_2$. We can now define the combinatorial data of a map with a cluster cycle to be the pair $(\rho,\delta)$. Note that this data is intrinsic to the cluster - it does not \emph{a priori} give any information about the rational map away from the cluster. 

We are now ready to state the two main theorems. Essentially, they both state that, in all degrees for the fixed case and in the quadratic case for period two cluster cycles, the combinatorial data is enough to define the rational map in question up to conjugacy by a M\"{o}bius transformation.

\begin{mthm}\label{Fixedcase}
  Suppose $F$ and $G$ are bicritical rational maps (with labelled critical points) with fixed cluster cycles with the same combinatorial data. Then $F$ and $G$ are equivalent in the sense of Thurston.
\end{mthm}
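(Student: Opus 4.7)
The plan is to construct the Thurston-equivalence homeomorphisms $\phi_1$ and $\phi_2$ directly from the shared combinatorial data $(\rho,\delta)$. My first step would be to observe that the cluster star $X_F$ is a tree with $2n$ arms whose cyclic order, first-return rotation number, and placement of the two labelled critical points are completely determined by $(\rho,\delta)$, and likewise for $X_G$. I would therefore pick an orientation-preserving homeomorphism $h\colon S^2 \to S^2$ sending $X_F$ onto $X_G$ arm-by-arm and preserving the labels; this automatically gives $h(P_F) = P_G$ with the labelled critical points matched, and makes $h$ equivariant for the respective first-return maps $F|_{X_F}$ and $G|_{X_G}$.

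Next I would set $\phi_1 := h$ and build $\phi_2$ by covering-space lifting. Since $F$ and $G$ are both degree-$d$ branched covers with branching concentrated at the critical points (which $h$ matches), there exists a homeomorphism $\phi_2\colon S^2 \to S^2$ with $G\circ\phi_2 = \phi_1\circ F$. Among the lifts differing by postcomposition with deck transformations, exactly one satisfies $\phi_2|_{P_F} = \phi_1|_{P_F}$: the equivariance of $\phi_1$ on the invariant set $X_F$ forces the correct choice of lift, so the first two conditions of Thurston equivalence come for free.

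The main obstacle is verifying that $\phi_1$ and $\phi_2$ are isotopic rel $P_F$. I would approach this by analysing the components of $G^{-1}(U_G)$ for a small tubular neighbourhood $U_G$ of $X_G$. As in the proof of Lemma~\ref{p:Marysresult}, Lemma~3.4 of~\cite{Milnor:Twocrit} identifies these preimages as $d$ disjoint tubular neighbourhoods of preimage stars of $X_G$, exactly one of which (namely $U_G$ itself) carries the cluster. Since $P_G$ lies entirely in that one component, the other $d-1$ preimage disks are disjoint from $P_G$ and are peripheral. The homeomorphism $\phi_2$ agrees with $\phi_1$ on the dynamically significant component by construction, and on the remaining peripheral disks it differs only by a self-homeomorphism of a disk that contains no postcritical points and fixes its boundary; such maps are isotopic to the identity rel $P_F$, giving the required isotopy.

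If one prefers a more conceptual route that bypasses the peripheral-disk bookkeeping, an alternative is to build an abstract combinatorial branched covering $\Phi_{\rho,\delta}$ directly from the data, prove it admits no Levy cycle (any candidate cycle would, by a curve-separation argument analogous to Lemma~\ref{p:Marysresult}, be forced to be peripheral in the fixed-cluster setting), and then invoke the Tan~Lei criterion to conclude that $\Phi_{\rho,\delta}$ is Thurston-equivalent to a unique rational map. Both $F$ and $G$ must then realize this model, and transitivity of Thurston equivalence finishes the proof. The hard part in either approach is the same: controlling what happens on the preimages of $X_F$ that are disjoint from the cluster, and making sure no hidden Dehn twist spoils the isotopy class of the lift.
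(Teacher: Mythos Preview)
Your overall framework is close to the paper's: build a conjugacy on the star, extend to a homeomorphism $\phi_1$ of the sphere, lift to $\phi_2$, and then verify the isotopy. The first two steps are essentially what the paper does (Lemmas~\ref{lemma1}--\ref{lemma3} and Propositions~\ref{p:extension}--\ref{newPhi}). The gap is in your isotopy argument.

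Your peripheral-disk analysis does not address the right question. The $d-1$ extra preimage stars of $X_G$ sit \emph{inside} the single region $\CC\setminus X_G$; they do not cut the sphere into pieces on which $\phi_1$ and $\phi_2$ can be compared separately. The map $\phi_1^{-1}\circ\phi_2$ is a self-homeomorphism of $S^2$ fixing $X_F$ pointwise (this follows, as you note, from the equivariance of $\phi_1$ on $X_F$ together with the fact that $G$ is injective on $X_G$), so it is determined by what it does on the \emph{single} complementary region $\CC\setminus X_F$. Your argument gives no control over that region as a whole; in particular it does not rule out a Dehn twist supported in an annulus separating the preimage stars from $X_F$ --- and such a twist is exactly the obstruction that appears in the period-two case. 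What makes the fixed case work is the topological fact you never invoke: because the cluster is \emph{fixed}, there is only one star, $X_F$ is connected, and $\CC\setminus X_F$ is simply connected. The paper exploits this by uniformising $\CC\setminus X_F$ by a disk via a Riemann map, transporting both $\phi_1$ and $\phi_2$ to self-maps $\Psi,\widehat\Psi$ of $\CC\setminus\D$ that agree on $\partial\D$, and then applying the Alexander trick (triviality of the mapping class group of the disk) to conclude they are isotopic rel boundary. That is the missing idea.

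Your alternative Levy-cycle route is plausible in spirit, but as stated it is only a sketch: you have not said how to build the model $\Phi_{\rho,\delta}$, nor why every non-peripheral curve must fail the Levy condition. The paper avoids this entirely by the direct Alexander-trick argument. (Two small technical remarks: the $d$ lifts of $\phi_1\circ F$ through $G$ are not related by deck transformations, since a generic bicritical rational map has trivial deck group; and the ``equivariance forces the correct lift'' statement needs the observation that $G|_{X_G}$ is injective, which you should make explicit.)
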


\begin{mthm}\label{Per2case}
  Suppose that two quadratic rational maps $F$ and $G$ have a period two cluster cycle with rotation number $p/n$ and critical displacement $\delta$. Then $F$ and $G$ are equivalent in the sense of Thurston.
\end{mthm}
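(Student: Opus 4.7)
My plan is to deduce Theorem~\ref{Per2case} by the same strategy used for Theorem~\ref{Fixedcase}: build a single topological branched cover $M$ out of the data $(p/n,\delta)$ alone, show that $M$ has no Levy cycle, and then verify that any quadratic rational map with this data is Thurston equivalent to $M$. By transitivity of Thurston equivalence this yields $F \sim M \sim G$, and hence $F \sim G$.

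First I would construct $M$ as follows. Fix two points $p_0, p_1 \in S^2$ (the prospective cluster points) and two embedded star-shaped trees $X_0, X_1$ with $2n$ arms each, endpoints labelled anticlockwise, and with marked critical points $c_1 \in X_0$ and $c_2 \in X_1$ placed in accordance with the parity and offset encoded by $\delta$. Define $M$ on $X_0 \cup X_1$ by swapping $p_0 \leftrightarrow p_1$ and cyclically permuting the arms so that the second-return $M^{\circ 2}$ on each star realises the rotation by $p/n$, and so that the arm containing $c_1$ maps to the arm in $X_1$ whose position relative to $c_2$ is given by $\delta$. The quadratic hypothesis, together with the fact from Lemma~\ref{p:Marysresult} that the two critical points must lie in different clusters, makes the prescription consistent and uniquely extends $M$ over the complementary disks as a degree-two branched cover, up to isotopy rel the trees and $P_M$.

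Next, given a quadratic rational map $F$ with data $(p/n, \delta)$, I would construct an orientation-preserving homeomorphism $\phi : (S^2, P_F) \to (S^2, P_M)$ taking the stars $X^F$ to $X^M$ equivariantly, so that $\phi \circ F = M \circ \phi$ on $X^F$; in particular $\phi|_{P_F}$ intertwines the critical portraits. Thurston's standard lifting then produces a homeomorphism $\psi$ with $M \circ \psi = \phi \circ F$ and $\psi|_{P_F} = \phi|_{P_F}$, and $F \sim M$ reduces to showing $\psi \simeq \phi$ rel $P_F$. In the bicritical setting the obstruction to this isotopy is a Levy cycle for the pulled-back branched cover (via the Tan Lei proposition quoted in the introduction); since the pulled-back cover is topologically conjugate to $F$, which is rational, no such Levy cycle can exist. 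The same argument applied to $G$ completes the proof.

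The main obstacle will be the rigidity step hidden inside the construction of $M$: one must verify that $(p/n, \delta)$, together with the quadratic hypothesis, genuinely pins down the topological dynamics on the cluster stars up to equivariant homeomorphism. Two ingredients will be essential here. First, the interlacing forces $\delta$ to be odd, so that the positions of Fatou components from the two critical orbits around either cluster are rigidly alternating. Second, because each critical point has local degree two, the action of $F$ on one cluster is fully transported to the other by $F^{\circ 2}$, leaving no independent rotation or displacement to specify on the second cluster. In higher degrees this rigidity fails: multiple arms from a single critical orbit can attach to one cluster in inequivalent ways compatible with the same $(p/n, \delta)$, which is precisely why Theorem~\ref{Per2case} is restricted to the quadratic setting.
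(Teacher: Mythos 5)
Your reduction of $F \sim M$ to ``no Levy cycle for the pulled-back cover'' is the step that fails. After you build $\phi$ conjugating the star dynamics and lift it to $\psi$ with $M \circ \psi = \phi \circ F$, the obstruction to $\psi \simeq \phi$ rel $P_F$ is not a Levy cycle: it is an element of the mapping class group of the sphere punctured at $P_F$, concretely a power $D^{\circ k}$ of the Dehn twist about the core curve of the annulus $\CC \setminus (X_F^1 \cup X_F^2)$ separating the two clusters. Absence of a Levy cycle for $F$ (or for $M$) only says each map is equivalent to \emph{some} rational map; it says nothing about whether this particular twist discrepancy can be removed, and two unobstructed maps with identical $(\rho,\delta)$ need not be equivalent to \emph{each other} --- this is exactly what the degree-three examples of Section~\ref{s:higher} are meant to exhibit. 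Nothing in your Levy-cycle step uses the quadratic hypothesis, so if that step were valid it would prove the general period-two statement as well, contradicting those examples. The same issue is hidden in your construction of $M$: the complement of the two stars is an annulus, whose mapping class group rel boundary is $\Z$, so the extension of the prescribed star dynamics over the complement is \emph{not} unique up to isotopy rel the trees and $P_M$; different extensions differ by Dehn twists and are a priori inequivalent as branched covers, so ``$M$'' is not yet well defined by $(\rho,\delta)$ alone.

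The paper instead works directly with $F$ and $G$ and confronts the twist head-on: after arranging $\Phi_1$ and $\widehat{\Phi}_1 = G^{-1}\circ \Phi_1 \circ F$ to agree with $\phi$ on the stars, the induced maps on the annulus differ by $D^{\circ k}$ for some $k \in \Z$, and since the curve $\kappa' = F^{-1}(\kappa)$ covers $\kappa$ with degree two, post-composing the lift with a $(-k)$-fold twist forces a $(-2k)$-fold twist on $\Phi_1$, so the discrepancy changes by $2k-k=k$ and can always be cancelled. That is where quadraticity enters; in degree $d$ only discrepancies divisible by $d-1$ can be cancelled. You have also misplaced the role of the degree-two hypothesis: the rigidity of the dynamics on the stars given $(\rho,\delta)$ (your ``main obstacle'') holds in every degree --- it is the analogue of Lemma~\ref{lemma11} --- and what fails in higher degree is the divisibility of the twist discrepancy, not the combinatorics of how arms attach to the clusters. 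To repair your approach you would need to prove, by some version of this Dehn-twist bookkeeping, that in degree two all admissible extensions of the star dynamics yield equivalent branched covers; at that point you would essentially be reproducing the paper's argument, and the detour through the model $M$ and Levy cycles buys nothing.
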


The proofs of these theorems are in Sections~\ref{s:fixed} and~\ref{s:per2} respectively. In Section~\ref{s:higher}, we will also show that (conjecturally) there exists degree 3 rational maps with the same combinatorial data which are not equivalent in the sense of Thurston. This means that, to carry out a classification in higher degrees, we need to find some combinatorial data that is extrinsic to the cluster - data which somehow is independent of the behaviour of the cluster cycle.

\section{Thurston equivalence for the fixed case}\label{s:fixed}

We now prove Theorem~\ref{Fixedcase}. The proof is relatively simple but the techniques used in proving it will make a useful comparison with the difficulties encountered when trying to prove the period two case in the next section. The proof will proceed as follows. To prove Thurston equivalence, we need to find homeomorphisms $\Phi,\widehat{\Phi} \colon \CC \to \CC$ which satisfy
	\begin{enumerate}
		\item{$\Phi \circ F = G \circ \widehat{\Phi}$.}
		\item{$\Phi|_{P_F} = \widehat{\Phi}|_{P_F}$.}
		\item{$\Phi$ and $\widehat{\Phi}$ are isotopic rel $P_F$.}
	\end{enumerate}
We will first construct the homeomorphism $\Phi$. We then try to construct the homeomorphism $\widehat{\Phi}$ so that it satisfies the conditions 1 to 3 above. The first two conditions will be satisfied by the construction given, whilst the third will follow from an application of Alexander's Trick. The whole proof will be broken down into a sequence of lemmas.

In what follows, we will denote the stars of $F$ and $G$ by $X_F$ and $X_G$ respectively. Recall that the star $X_F$ of a rational map $F$ is made up of the union of the internal rays inside the critical orbit Fatou components and the cluster point. By B\"{o}ttcher's theorem, the dynamics of the first return map on each critical orbit Fatou component is then conjugate to the map $z \mapsto z^{d}$ on the disk, $\D$. We also can label the critical orbit points cyclically as follows. Let $c_{0}$ be the first critical point, in terms of the ordering induced by the critical displacement, so that the critical displacement is defined to be the combinatorial distance (anticlockwise) around the star from $c_0$ to the other critical point. We label, counting anticlockwise, the other critical orbit points by $c_1, c_2, \ldots,c_{2n-1}$, and denote the Fatou component containing $c_i$ by $U_i$. Note that at this point we are not worried about which critical orbit the $c_i$ are in, since we are only concerned with the dynamics of the first return map on each component. Finally let the $0$-internal ray in $U_i$ be labelled $J_i$.

In the following lemma, the objects associated with the map $G$ will be given a $'$ to differentiate them from the objects associated with $F$. For example, the first critical point of $X_G$ will be labelled $c_{0}'$, and it will be in the Fatou component $U_0'$.

\begin{lem}\label{lemma1}
	Suppose $F$ and $G$ have the same combinatorial data. Then there exists a conjugacy $\phi \colon X_F \to X_G$. That is, $\phi \circ F = G \circ \phi$ on $X_F$. Furthermore, the conjugacy can be constructed so as to preserve the cyclic ordering of the internal rays in the star.
\end{lem}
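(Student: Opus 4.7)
The plan is to build $\phi$ by giving each arm of $X_F$ and $X_G$ a uniform parametrization by $[0,1]$ coming from B\"ottcher coordinates, and then declaring $\phi$ to be the identity in these parametrizations on every arm, glued at the cluster point. With this setup, both the conjugacy property and the cyclic ordering condition should follow mechanically from the hypothesis that $F$ and $G$ share the same combinatorial data $(\rho, \delta)$.

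The construction proceeds one critical orbit at a time. For the orbit containing $c_0$, temporarily relabel its components as $V_0, V_1, \ldots, V_{n-1}$ with $c_0 \in V_0$ and $F(V_i) = V_{i+1 \pmod{n}}$. By B\"ottcher's theorem there is a homeomorphism $\beta_0 \colon V_0 \to \D$ conjugating $F^{\circ n}|_{V_0}$ to $z \mapsto z^d$, chosen so that the $0$-internal ray corresponds to the positive real axis. Transport this coordinate by $\beta_i(w) := \beta_0(F^{\circ (n-i)}(w))$ for $w \in V_i$; each $\beta_i$ is a homeomorphism because the composition involved avoids the critical point in $V_0$. A direct check then shows that, in these coordinates, $F \colon V_0 \to V_1$ is $z \mapsto z^d$ while $F \colon V_i \to V_{i+1}$ is the identity for $1 \le i \le n-1$. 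Carry out the same construction for the orbit containing $c_\delta$, and likewise for the two critical orbits of $G$, producing coordinates $\beta_j'$ on the primed components. Restricting to $0$-internal rays gives homeomorphisms $J_i \cong [0,1] \cong J_i'$ with $0$ at the endpoint $c_i$ (respectively $c_i'$) and $1$ at the cluster point. Define $\phi|_{J_i} = (\beta_i')^{-1} \circ \beta_i$, and send the cluster point of $F$ to the cluster point of $G$. Cyclic order is preserved by construction: both labellings $c_0, c_1, \ldots, c_{2n-1}$ run anticlockwise with the two critical points at positions $0$ and $\delta$, and this matches across $F$ and $G$ by hypothesis.

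To verify $\phi \circ F = G \circ \phi$ on each arm, equal combinatorial rotation numbers force $F$ to send $J_i$ to $J_{i+2p \pmod{2n}}$ and $G$ to send $J_i'$ to $J_{i+2p \pmod{2n}}'$, while in the chosen parametrizations both maps reduce to either the identity or $t \mapsto t^d$, depending only on whether the source arm is one of the two critical arms. Hence on every arm $\phi \circ F$ and $G \circ \phi$ coincide with the same self-map of $[0,1]$, and they also agree at the cluster point. The main subtlety I anticipate is the bookkeeping: one needs to confirm that the transported coordinates close up consistently around each orbit, that the two critical arms of $F$ and $G$ end up at the matching indices $0$ and $\delta$, and that the alternation of the two critical orbits around the cluster (which follows from $\delta$ being odd) is compatible with the labelling. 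Once these indexing checks are in place, the rest of the argument is essentially automatic.
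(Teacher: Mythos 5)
Your proposal is correct and follows essentially the same route as the paper: B\"ottcher coordinates on each critical orbit Fatou component normalised so that the $0$-internal ray goes to $[0,1)$, a conjugacy defined arm by arm via $h_G^{-1}\circ h_F$, glued at the cluster point, with equality of $(\rho,\delta)$ used only to see that the dynamically matched arms occupy the same cyclic positions and that the labelled critical points correspond. Your explicit transport of a single B\"ottcher coordinate along each orbit (so that $F$ and $G$ become the identity or $z\mapsto z^d$ on each step) just makes precise the single-iterate compatibility that the paper's choice of the $h_{F,i}$, $h_{G,i}$ leaves implicit.
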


\begin{proof}
We will show that there is a conjugacy between the dynamics on $J_i$ and $J_i'$ for each choice of $i$. There is a map $h_{F,i}$ conjugating the dynamics on $U_i$ with that of $z \mapsto z^d$ on $\D$, so that $h_{F,i}(J_i) = [0,1)$. Similarly, there exists a conjugacy $h_{G,i}$ from $U_i'$ to $z \mapsto z^d$ on $\D$, with $h_{G,i}(J_i) = [0,1)$ . So the map $\phi_i = h_{G,i}^{-1} \circ h_{F,i}$ conjugates the dynamics on $U_i$ with that on $U_i'$, and in particular takes $J_i$ to $J_i'$. So the restriction of $\phi_i$ to $J_i$ is the required conjugacy on $J_i$.

The required conjugacy $\phi$ is then defined by mapping the cluster point $c \in X_F$ to the cluster point $c' \in X_G$, (i.e, $\phi(c) = c'$) and then picking $\phi|_{J_i} = \phi_i$.
\end{proof}

\begin{lem}\label{lemma3}
	Let $\phi$ be the homeomorphism from Lemma~\ref{lemma1}. Then there exists continuous maps $\tilde{\eta}_F$ and $\tilde{\eta}_G$ and a homeomorphism $\psi$ such that the following diagram commutes.
	\[
             \xymatrix{       \partial \D \ar[rr]^{\psi} \ar[dd]_{\tilde{\eta}_F}    && \partial \D \ar[dd]^{\tilde{\eta}_G}
            \\ \\
                X_F \ar[rr]_{\phi}                       && X_G }
        \]
\end{lem}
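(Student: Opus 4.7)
The idea is to realize $\tilde{\eta}_F$ and $\tilde{\eta}_G$ as Carath\'{e}odory-type parameterizations of the boundaries of tubular neighborhoods of the stars collapsed onto the stars themselves, and then to build $\psi$ by lifting $\phi$ arc by arc. The cyclic-order-preserving property of $\phi$ from Lemma~\ref{lemma1} is precisely what makes such a lift possible.

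In more detail, I would first choose a closed tubular neighborhood $N_F$ of $X_F$ in $S^{2}$. Because $X_F$ is a contractible planar graph (a $2n$-pronged star), $N_F$ is a closed topological disk, so its boundary is a Jordan curve which I identify with $\partial \D$ via an orientation-preserving homeomorphism. Composing with a deformation retraction $N_F \to X_F$ produces a continuous surjection $\tilde{\eta}_F \colon \partial \D \to X_F$. I would arrange the retraction so that $\partial \D$ is cut by $4n$ marked points into $4n$ closed subarcs, two per ray $J_i$ (one for each side): the $2n$ marked points lying between two successive rays map to the cluster point, while the $2n$ marked points lying between the two subarcs of a single ray $J_i$ map to the leaf $c_i$. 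On each subarc, $\tilde{\eta}_F$ is a homeomorphism onto the corresponding closed ray $J_i$. The analogous construction applied to $X_G$ gives $\tilde{\eta}_G$ with the same combinatorial structure.

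Since $\phi$ preserves the cyclic order of the rays, there is a natural pairing between the $4n$ subarcs of $\partial \D$ coming from $\tilde{\eta}_F$ and those coming from $\tilde{\eta}_G$. For each subarc $A \subset \partial \D$ corresponding to one side of $J_i$, with counterpart $A'$ corresponding to the same side of $J_i'$, I define $\psi|_{A} := (\tilde{\eta}_G|_{A'})^{-1} \circ \phi|_{J_i} \circ \tilde{\eta}_F|_{A}$. This is a homeomorphism from $A$ onto $A'$, and the union of all these pieces gives a candidate $\psi \colon \partial \D \to \partial \D$. Commutativity of the diagram on the interior of each subarc is immediate from the definition.

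The main point requiring genuine checking, and the step I expect to be the principal obstacle, is continuity of $\psi$ at the $4n$ subarc endpoints. At an endpoint that is a preimage of a leaf $c_i$, the two subarcs meeting there correspond to the two sides of $J_i$, and the two one-sided definitions of $\psi$ agree because $\phi(c_i)$ is the endpoint of $J_i'$, whose unique leaf-preimage on the $X_G$ side of $\partial \D$ is the corresponding transition point. At an endpoint that is a preimage of the cluster point, both one-sided definitions agree because $\phi$ maps the cluster point of $X_F$ to the cluster point of $X_G$, and the cyclic-order compatibility supplied by Lemma~\ref{lemma1} guarantees that the pair of subarcs adjacent to the $X_G$-side cluster-preimage are exactly the counterparts of the pair adjacent to the $X_F$-side cluster-preimage. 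Once continuity is in hand, $\psi$ is a continuous bijection between compact $1$-manifolds and hence a homeomorphism, and the commutativity of the diagram follows by construction.
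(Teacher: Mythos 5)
Your construction is correct as a proof of the lemma as stated, and the heart of it -- lifting $\phi$ to a circle homeomorphism $\psi$ by pairing the $4n$ subarcs cut out by the $2n$ preimages of the leaves and the $2n$ preimages of the cluster point, then checking agreement at the transition points -- is the same combinatorial mechanism the paper uses. Where you genuinely diverge is in how $\tilde{\eta}_F$ and $\tilde{\eta}_G$ are produced: the paper takes the Riemann map $\eta_F \colon \CC \setminus \overline{\D} \to \CC \setminus X_F$ and invokes Carath\'{e}odory's theorem (using local connectivity of the star) to extend it to $\partial \D$, whereas you build a purely topological parameterization by collapsing the boundary of a tubular neighbourhood of the star. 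Your route avoids conformal machinery entirely, and your arc-by-arc gluing gives a cleaner continuity argument than the paper's three-case sequential check (interior points, preimages $p_i$ of the leaves, preimages $\xi_i$ of the cluster point). What the paper's choice buys, however, is essential for the rest of the argument: in Proposition~\ref{p:extension}, Lemma~\ref{newPsi} and the proof of Theorem~\ref{Fixedcase}, the maps $\tilde{\eta}_F,\tilde{\eta}_G$ must be boundary restrictions of global homeomorphisms $\eta_F,\eta_G \colon \CC \setminus \overline{\D} \to \CC \setminus X_F$, since $\Phi$ is defined as $\eta_G \circ \Psi \circ \eta_F^{-1}$ off the star and the Alexander-trick isotopy is transported through these maps. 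Your tubular-neighbourhood parameterization proves the lemma as literally stated, but to slot it into the paper you would need the additional (true, but not automatic) observation that the collapse map extends to a homeomorphism of $\CC \setminus \overline{\D}$ onto $\CC \setminus X_F$, or equivalently that your $\tilde{\eta}_F$ differs from the Carath\'{e}odory extension by a circle homeomorphism respecting the marked points.
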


\begin{proof}
We remark that $\CC \setminus X_F$ is simply connected, since $X_F$ is a connected set. Hence, by the Riemann Mapping Theorem, there exists a Riemann map $\eta_F \colon \CC \setminus \overline{\D} \to \CC \setminus X_F$. Similarly, there exists a Riemann map $\eta_G \colon \CC \setminus \overline{\D} \to \CC \setminus X_G$. Since the star $X_F$ is locally connected, by Carath\'{e}odory's Theorem, we can extend the maps $\eta_F$ and $\eta_G$ to $\CC \setminus \D$ in a continuous way. We label these extensions $\tilde{\eta}_F$ and $\tilde{\eta}_G$.

As $\phi$ is a homeomorphism, it maps arms of the star $X_F$ to arms of $X_G$. We now define the map $\psi$. Clearly, we would like to define $\psi = \tilde{\eta}_G^{-1} \circ \phi \circ \tilde{\eta}_F$. However, since most points in $X_G$ (indeed, all points not in $P_G$) have more than one pre-image under the mapping $\tilde{\eta}_G$, this function is not well-defined. However, it is possible to use this motivating idea to construct $\psi$, by choosing the ``correct" pre-image when necessary. Note that each point in the postcritical set of $F$ has precisely one pre-image under $\tilde{\eta}_F$, and the cyclic ordering of the $\tilde{\eta}_F^{-1}(c_{i})$ is the same as the cyclic ordering of the $c_{i}$. Since the same is true for $\tilde{\eta}_G$ (that is, points on the postcritical set have only one pre-image), we can define $\psi = \tilde{\eta}_G^{-1} \circ \phi \circ \tilde{\eta}_F$ on the postcritical set. Since $\phi$ will rotate the arms of the star by $k$ places anticlockwise, $\psi$ will do the same. For ease of notation we will write $p_{i} = \tilde{\eta}_{F}^{-1}(c_{i})$ and $p_{i}' = \tilde{\eta}_{F}^{-1}(c_{i}')$. With this new notation, therefore, we have $\psi(p_{i}) = p_{i}'$.

The cluster point $c$ in $X_F$ has $2n$ preimages under $\tilde{\eta}_F$, and each pre-image lies in one of the arcs $(p_{i},p_{i+1})$ for $i = 0,\ldots,2n-1$ (otherwise cyclic ordering would not be maintained). Label the pre-image in $(p_{i},p_{i+1})$ by $\xi_{i}$. Similarly, the cluster point $c'$ in $X_G$ has $2n$ preimages under $\tilde{\eta}_G$, and each pre-image lies in one of the arcs $(c_{i}',c_{i+1}')$ for $i = 0,\ldots,2n-1$. So we denote the pre-image in $(c_{i}',c_{i+1}')$ by $\xi_{i}'$. We then define $\psi(\xi_{i}) = \xi_{i}'$, and note that this satisfies $\tilde{\eta}_G \circ \psi =  \phi \circ \tilde{\eta}_F$, since $\phi(c) = c'$. Furthermore, this agrees with the cyclic ordering induced on the circle by the rotation of arms in the map $\phi$. See Figure~\ref{f:stardisk} for the construction so far in the case where the critical orbits have period $4$.

\begin{figure}[ht]
\begin{center}

\input{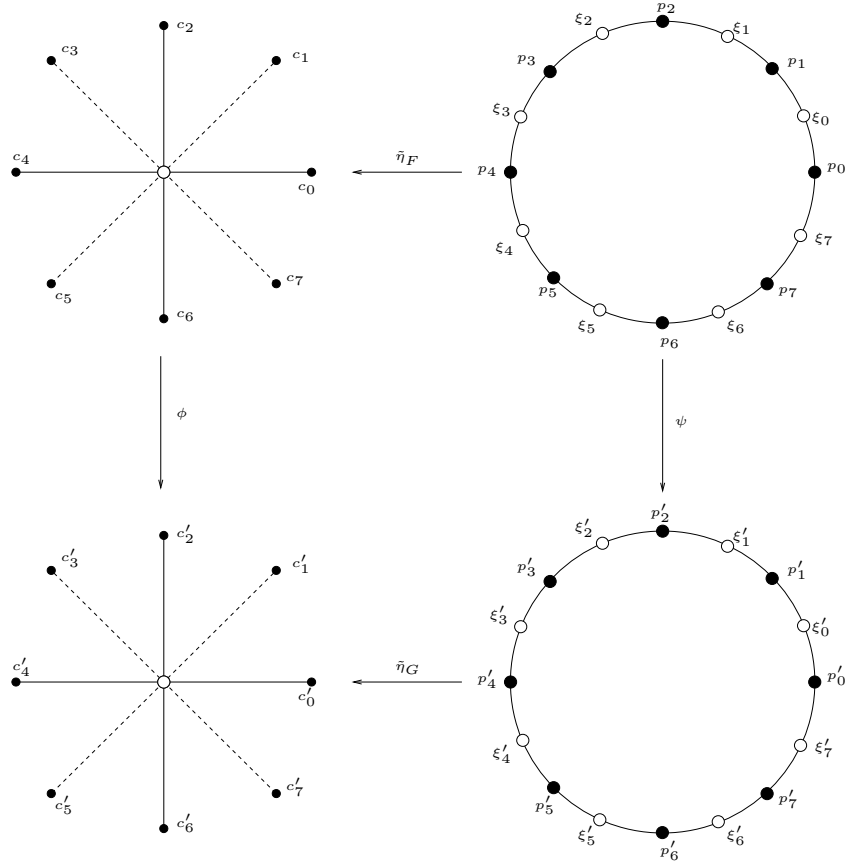}

\caption{Construction of the map $\psi$ in Lemma~\ref{lemma3}.}
\label{f:stardisk}
\end{center}
\end{figure}

Now let $z \in X_F$ where $z$ is not in the postcritical set or equal to the cluster point. Then $z$ has precisely two pre-images under $\tilde{\eta}_F^{-1}$ and the point $z' = \phi(z) \in X_G$ has two pre-images under $\tilde{\eta}_G^{-1}$. For the diagram in the statement of the lemma to commute, we need to have $\psi(\tilde{\eta}_F^{-1}(z)) \in \tilde{\eta}_G^{-1}(z')$. It is clear that $z$ must belong to the interior of some internal ray of the form $[0,c_{i}] \subset X_F$, hence $z'$ belongs to the internal ray $[0,c_{i}'] \subset X_G$. This means that there is a point $w_{1}$ of $\tilde{\eta}_F^{-1}(z)$ in the arc $(\xi_{i-1},p_{i})$ and the other point $w_{2}$ must be in the arc $(p_{i},\xi_{i})$. Furthermore, the two pre-images of $z'$ (under the map $\tilde{\eta}_G$) are $w_{1}' \in (\xi_{i-1}',p_{i}')$ and $w_{2}' \in (p_{i}',\xi_{i})$. We now define $\psi(w_{1}) = w_{1}'$ and $\psi(w_{2}) = w_{2}'$. This definition satisfies $\phi \circ \tilde{\eta}_F = \tilde{\eta}_G \circ \psi$. Notice further that this construction will preserve the cyclic ordering of the points on the circle.

We now show that $\psi$ is a homeomorphism. The construction of $\psi$ shows it is clearly bijective, so we only need to show $\psi$ is continuous. Then, since $\psi$ will be a continuous bijection from a compact space to a Hausdorff space, it will be a homeomorphism  Indeed, since $\tilde{\eta}_F$ and $\phi$ are continuous maps, it is sufficient to check the choices we made for $\tilde{\eta}_G^{-1}$ are done in a continuous way. We have three cases.

\noindent\emph{Case 1:} $z \notin \{ p_{0},\ldots,p_{2n-1},\xi_{0},\ldots,\xi_{2n-1} \}$. Suppose $z$ lies in some arc $(p_{i}, \xi_{i})$. Then any sequence $v_{n} \to z$ will belong to $(p_{i}, \xi_{i})$ if $n$ is large enough. Then by construction we will have $\psi(v_{n}) \in (p_{i}',\xi_{i}')$. Now we notice that $\tilde{\eta}_G$ is a homeomorphism on $(p_{i}', \xi_{i}')$ and so we must have $\psi$ is continuous at $z$ as it is (locally) the composition of continuous maps. A similar argument holds when $z$ belongs to some arc $(\xi_i,p_{i+1})$.

\noindent\emph{Case 2:} $z = p_{i}$. Let $x_{n} \to z$. Then $\phi(\tilde{\eta}_F(x_n)) \in (0,c_{i}']$ for $n$ sufficiently large and by continuity of $\tilde{\eta}_F$ and $\phi$, $\phi(\tilde{\eta}_F(x_n)) \to c_{i}'$. Now the map $\tilde{\eta}_G$ is a branched covering when restricted to $(\xi_{i-1}',\xi_{i}')$, with image $(0,c_{i}']$. The unique branch point is $p_{i}'$ which has its image at $c_{i}'$. So as we have $\phi(\tilde{\eta}_F(x_n)) \to c_{i}'$, we must have $\tilde{\eta}_G^{-1}(\phi(\tilde{\eta}_F(x_n))) \to \tilde{\eta}_G^{-1}(c_{i}') = p_{i}'$. 

\noindent\emph{Case 3:} $z = \xi_{i}$. Let $y_{n} \to z$. Then for $n$ sufficiently large, $y_{n} \in (p_{i},p_{i+1})$, $\phi(\tilde{\eta}_F(y_n)) \in [c',c_{i}')\cup[c',c_{i+1}))$ and $\phi(\tilde{\eta}_F(x_n)) \to c'$. So $\tilde{\eta}_G^{-1}(\phi(\tilde{\eta}_F(y_{n}))) \in (p_{i}',p_{i+1}')$, by the construction of $\psi$ (the other pre-images of $\phi(\tilde{\eta}_F(y_{n}))$ cannot be the images of $\psi$, since as $y_{n} \in (p_{i},p_{i+1})$, we must have $\psi(y_{n}) \in (p_{i}',p_{i+1}')$). We then use the fact that $\tilde{\eta}_G$ restricted to $(p_{i}',p_{i+1}')$ is a homeomorphism, which once again means $\psi$ is continuous at $z$. Hence $\phi$ is a homeomorphism.
\end{proof}

We now extend the map $\psi$.

\begin{prop}\label{p:extension}
	The map $\psi$ of Lemma~\ref{lemma3} can be extended to a homeomorphism $\Psi \colon \CC \setminus \D \to \CC \setminus \D$. This map $\Psi$ induces a homeomorphism $\Phi \colon \CC \to \CC$, such that $\Phi|_{X_F} = \phi$ (i.e, $\Phi$ is an extension of $\phi$ to the sphere) and
	\[
             \xymatrix{       \CC \setminus \D \ar[rr]^{\Psi} \ar[dd]_{\tilde{\eta}_F}    && \CC \setminus \D \ar[dd]^{\tilde{\eta}_G}
            \\ \\
                \CC \ar[rr]_{\Phi}                       && \CC }
        \]
commutes.
\end{prop}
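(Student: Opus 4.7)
The plan is first to extend the circle homeomorphism $\psi$ to all of $\CC \setminus \D$, and then to transport this extension down to the sphere via the Carath\'eodory maps. Since $\CC \setminus \D$ is homeomorphic to a closed topological disk (via $z \mapsto 1/z$), and $\psi$ is an orientation-preserving homeomorphism of $\partial \D$---it preserves the cyclic order of the marked points $p_i, \xi_i$ and $p_i', \xi_i'$ by construction---any standard extension result (Alexander's radial trick applied in the $\overline{\D}$ coordinate, or the cone extension, or Schoenflies) produces a homeomorphism $\Psi \colon \CC \setminus \D \to \CC \setminus \D$ that restricts to $\psi$ on $\partial \D$.

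With $\Psi$ in hand, I would define $\Phi \colon \CC \to \CC$ piecewise. On $X_F$, set $\Phi := \phi$. On $\CC \setminus X_F$, the Riemann map $\eta_F$ is a homeomorphism from $\CC \setminus \overline{\D}$ onto $\CC \setminus X_F$, and similarly for $\eta_G$, so on this open set I set
\[
\Phi := \tilde{\eta}_G \circ \Psi \circ \eta_F^{-1}.
\]
Bijectivity of $\Phi$ from $\CC$ to $\CC$ is immediate from the bijectivity of $\phi \colon X_F \to X_G$, of $\eta_F$, and of $\Psi$ on the open exterior, together with the observation that the images of the two pieces are complementary.

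The crux of the argument is continuity of $\Phi$ at points of $X_F$, where the two defining formulas must agree in the limit. Fix $z \in X_F$ and a sequence $w_n \to z$ with $w_n \in \CC \setminus X_F$ (the case $w_n \in X_F$ is immediate from continuity of $\phi$). Set $u_n := \eta_F^{-1}(w_n) \in \CC \setminus \overline{\D}$, and let $u$ be any accumulation point of $(u_n)$. Then $u \in \partial \D$: otherwise $\tilde{\eta}_F(u) = \lim w_n = z \in X_F$ would contradict $\eta_F$ being a bijection from $\CC \setminus \overline{\D}$ onto $\CC \setminus X_F$. Continuity of $\Psi$ on $\CC \setminus \D$ and of $\tilde{\eta}_G$ then gives
\[
\Phi(w_n) = \tilde{\eta}_G(\Psi(u_n)) \longrightarrow \tilde{\eta}_G(\psi(u)) = \phi(\tilde{\eta}_F(u)) = \phi(z) = \Phi(z),
\]
where the middle equality is precisely the content of Lemma~\ref{lemma3}. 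Since every subsequential limit of $\Phi(w_n)$ equals $\Phi(z)$, the whole sequence converges to $\Phi(z)$. Thus $\Phi$ is a continuous bijection from the compact Hausdorff sphere to itself, hence a homeomorphism. Commutativity of the displayed diagram is then automatic: on $\CC \setminus \overline{\D}$ it is the definition of $\Phi$, and on $\partial \D$ it is Lemma~\ref{lemma3}.

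The main obstacle is not the extension from circle to disk (a routine matter) but rather the continuity check at $X_F$---particularly at the cluster point, whose many $\tilde{\eta}_F$-preimages $\xi_i$ could in principle cause the two definitions of $\Phi$ to clash. That they do not is exactly what the case-by-case construction in Lemma~\ref{lemma3} was engineered to guarantee, so once that lemma is in place the proposition follows.
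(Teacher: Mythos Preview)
Your proof is correct and follows the same construction as the paper: extend $\psi$ to $\Psi$ via the Alexander trick, then define $\Phi$ piecewise by $\phi$ on $X_F$ and $\eta_G \circ \Psi \circ \eta_F^{-1}$ off it. The only cosmetic difference is that the paper verifies $\Phi$ is a homeomorphism by invoking that $\tilde{\eta}_F$ and $\tilde{\eta}_G$ are quotient maps (continuous surjections from compact to Hausdorff), whereas you unpack the same fact as a sequential continuity check at points of $X_F$; both arguments encode exactly the commutativity established in Lemma~\ref{lemma3}.
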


\begin{proof}
The extension of $\psi$ to $\Psi$ is an application of the Alexander Trick. 

We want $\Phi$ to be an extension of $\phi$, and hence it is necessary that we have $\Phi(z) = \phi(z)$ on $X_F$. Note that, considering $\tilde{\eta}_F^{-1}(z)$ as a set, the commutative diagram for $\psi$ in Lemma~\ref{lemma3} suggests we can write $\phi(z) = (\tilde{\eta}_G \circ \psi) ( \tilde{\eta}_F^{-1}(z) )$ for $z \in X_F$.  Bearing this in mind, define
\[
	\Phi(z)  = \left\{
                        \begin{array}{ll}
                            \eta_G \circ \Psi \circ \eta_F^{-1}(z) ,  & \hbox{$z \in \CC \setminus X_F$;} \\
                            \phi (z),                  & \hbox{$z \in X_F$.}
                        \end{array}
             \right.
\]
The proof that $\Phi$ is a homeomorphism is essentially just using the fact that $\tilde{\eta}_F$ and $\tilde{\eta}_G$ are quotient maps. This means that $\tilde{\eta_G} \circ \Psi$ is a quotient map and so $\Phi$ is a homeomorphism.
\end{proof}

We note that, so far in this section, we have not needed any requirement about the combinatorial data being equal, so these results hold in full generality. However, the next result is the point where the equality of combinatorial data is needed. 

Before we prove the next proposition, we briefly discuss the space $\CC \setminus X_F$ and its pre-image $\CC \setminus F^{-1}(X_F)$. Informally, first note that the star $X_F$  has $d$ pre-images in $\CC$, and each pre-image is disjoint, save for the fact that they all contain the critical points of $F$. More exactly, we notice that the set $X_F \setminus \{\text{critical values}\}$ has $d$ disjoint pre-images under $F$, and the union of one of these pre-images with the two critical points will map homeomorphically onto $X_F$. We call each of these pre-images, with its union with the critical points, a pre-image star of $X_F$. Note that there is a cyclic order of these pre-image stars at $c_0$ (the first marked critical point), so that we can label them as follows. Label $X_F$ by $X_1$. Counting anticlockwise from this pre-image, label the remaining stars $X_2, X_3, \ldots X_d$.

Furthermore, we notice that $\CC \setminus F^{-1}(X_F) = \CC \setminus \bigcup_{j=1}^{d} X_j$ contains $d$ connected components. Label these components in anticlockwise order, starting with $\A_1$ as the component anticlockwise from $X_F = X_1$ with respect to the cyclic ordering at $c_{0}$, and the others in order as $\A_2,\ldots,\A_d$. We remark that in this notation, the boundary of $\A_j$ is contained in $X_j \cup X_{j+1}$. With this notation,  the map $F|_{\A_j} \colon \A_j \to \CC \setminus X_F$ is a homeomorphism, and the map $F|_{\CC \setminus F^{-1}(X_F)} \colon \CC \setminus F^{-1}(X_F) \to \CC \setminus X_F$ is a degree $d$ covering map. For ease of notation we write $F_j = F|_{\A_j}$. See Figure~\ref{f:mainthmpic}.
\begin{figure}[ht]
\begin{center}

\input{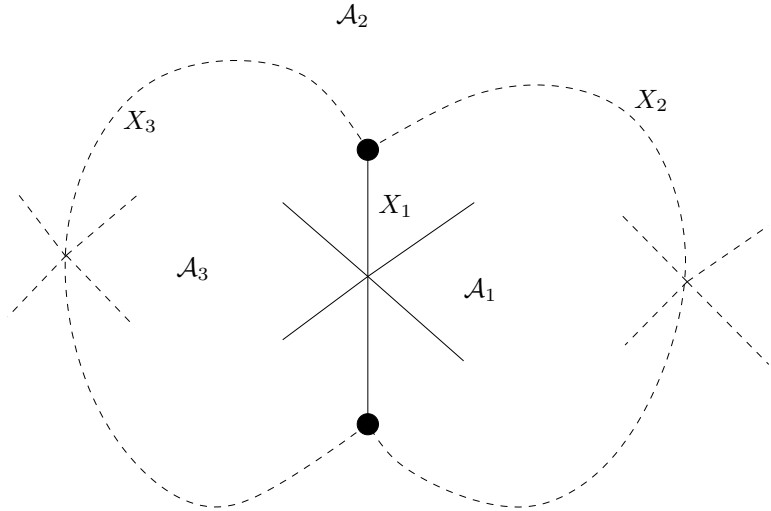}

\caption{The star (bold line) and pre-image stars (dashed line) and how they separate the sphere. The black dots represent the critical points, where the star and pre-stars meet.}
\label{f:mainthmpic}
\end{center}
\end{figure}
We can carry out a similar construction with $G$. Using the same construction as above, the pre-image stars of $X_G$ are $X_1',X_2',\ldots,X_d'$ and the connected components of $\CC \setminus G^{-1}(X_G)$ are $\A_1',\A_2',\ldots,\A_d'$. $G_j$ will denote the map $G|_{\A_j'}$.

\begin{prop}\label{newPhi}
	There exists a homeomorphism $\widehat{\Phi} \colon \CC \to \CC$ so that
	\[
             \xymatrix{       \CC \ar[rr]^{\widehat{\Phi}} \ar[dd]_{F}    && \CC \ar[dd]^{G}
            \\ \\
                \CC \ar[rr]_{\Phi}                       && \CC }
        \]
commutes.
\end{prop}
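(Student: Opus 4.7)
The plan is to construct $\widehat{\Phi}$ piecewise: first define it on the preimage $F^{-1}(X_F) = \bigcup_{j=1}^{d} X_j$ of the star, and then extend it to each complementary region $\A_j$ by lifting $\Phi$. Throughout, pre-image stars and complementary regions of $F$ are matched with those of $G$ via the cyclic labelings already fixed in the paragraph preceding the proposition, namely $X_j \mapsto X_j'$ and $\A_j \mapsto \A_j'$.

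On the pre-image stars, I would set $\widehat{\Phi}|_{X_1} = \phi$ and, for $j \geq 2$, set $\widehat{\Phi}|_{X_j} = (G|_{X_j'})^{-1} \circ \phi \circ F|_{X_j}$. Because each $F|_{X_j} \colon X_j \to X_F$ and each $G|_{X_j'} \colon X_j' \to X_G$ is a homeomorphism, these formulas each define a homeomorphism $X_j \to X_j'$ for which $G \circ \widehat{\Phi} = \phi \circ F = \Phi \circ F$. The various formulas agree at the two critical points $c_0, c_\delta$, which lie in every $X_j$: at $c_0$, each formula yields $(G|_{X_j'})^{-1}(\phi(c_1)) = (G|_{X_j'})^{-1}(c_1') = c_0'$, since $c_0'$ is the unique point of $X_j'$ mapping to $c_1'$ under $G$. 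On each complementary region, I would then set $\widehat{\Phi}|_{\A_j} = G_j^{-1} \circ \Phi \circ F_j$; because $F_j$, $G_j$, and $\Phi|_{\CC \setminus X_F}$ are all homeomorphisms, this defines a homeomorphism $\A_j \to \A_j'$ and satisfies $G \circ \widehat{\Phi} = \Phi \circ F$ on $\A_j$ by construction.

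The main obstacle is verifying that these pieces glue continuously along common boundaries; once this is done, $\widehat{\Phi}$ is a continuous bijection from the compact space $\CC$ to the Hausdorff space $\CC$, hence a homeomorphism, and the commutative diagram holds by construction. Take a non-critical point $z \in X_j$ and a sequence $z_n \to z$ with $z_n \in \A_j$. Then $F_j(z_n) \to F(z) \in X_F$, so $\Phi(F_j(z_n)) \to \phi(F(z)) \in X_G$, and $\widehat{\Phi}(z_n) = G_j^{-1}(\Phi(F_j(z_n)))$ tends to a pre-image of $\phi(F(z))$ in the closure $\overline{\A_j'}$, whose frontier is contained in $X_j' \cup X_{j+1}'$. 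The crux is to show that this limit lies on $X_j'$ so that it matches $\widehat{\Phi}|_{X_j}(z) = (G|_{X_j'})^{-1}(\phi(F(z)))$, together with the symmetric check for sequences in $\A_{j-1}$ converging to $X_j$. This is precisely where the equality of combinatorial data $(\rho, \delta)$ enters: it forces $\phi$ to preserve the cyclic ordering of arms at the cluster point and the critical points (Lemma~\ref{lemma1}), so that the labelings $X_j \leftrightarrow X_j'$ and $\A_j \leftrightarrow \A_j'$ are compatible with the incidence structure on both sides. Continuity then holds at every boundary point (the critical points having been handled above), and $\widehat{\Phi}$ is the required homeomorphism.
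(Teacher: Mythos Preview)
Your construction is the same as the paper's: define $\widehat{\Phi}$ on $X_1 = X_F$ as $\phi$, on each other pre-image star $X_j$ by choosing the branch of $G^{-1}\circ\Phi\circ F$ landing in $X_j'$, and on each $\A_j$ by choosing the branch landing in $\A_j'$. (A minor slip: in your check at $c_0$ you write $\phi(c_1)$, but in the paper's labeling $c_1$ is the point one step anticlockwise from $c_0$, not the critical value $F(c_0)$; the argument is unaffected once the notation is corrected.)

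Where you diverge is in the continuity verification. You argue sequentially, taking $z_n \in \A_j$ converging to $z \in X_j$ and trying to pin down which of $X_j'$ or $X_{j+1}'$ the limit of $\widehat{\Phi}(z_n)$ lies on; you then invoke compatibility of the cyclic labelings. The paper instead argues on the target side: for a small disc $U$ avoiding the critical points, $F^{-1}(\Phi^{-1}(G(U)))$ consists of $d$ disjoint open sets, and the construction ensures $\widehat{\Phi}^{-1}(U)$ is exactly one of them, hence open. This inverse-image argument avoids your case analysis of ``which side'' the limit lands on, and handles points of $X_j$ and interior points of $\A_j$ uniformly. Both routes are valid and rest on the same underlying fact (the consistency of the $j\leftrightarrow j$ matching forced by equal combinatorial data), but the paper's formulation is cleaner and requires less local bookkeeping.
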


\begin{proof}
As with Lemma~\ref{lemma3}, this proof is constructive. First note that if $z \in X_F$, then we define $\widehat{\Phi}(z) = \phi(z) \in X_G$, taking advantage of the fact that $\phi$ is a conjugacy between the dynamics on $X_F$ and $X_G$ (Lemma~\ref{lemma1}).
	
Now suppose $z$ is in $F^{-1}(X_F)$. The case where $z \in X_F$ is dealt with above, so we may assume $z \in X_j$ for some $j \in \{2,\ldots,d \}$. Then for the diagram to commute we require $\widehat{\Phi}(z) \in G^{-1} \circ \Phi \circ F(z) = G^{-1} \circ \phi \circ F(z)$. The set $G^{-1} \circ \Phi \circ F(z)$ contains $d$ elements, one each in $X_1',\ldots,X_d'$. Since $z \in X_j$, we choose $\widehat{\Phi}(z) \in X_{j}'$.

Finally, suppose $z \in \CC \setminus F^{-1}(X_F)$. Then $z \in \A_j$ for some $j \in \{1,\ldots,d \}$. With a similar argument to that in the previous paragraph, the set $G^{-1} \circ \Phi \circ F(z)$ contains $d$ elements, one in each of the $\A_j$. So as we have, $z \in \A_j$, we define $\widehat{\Phi}(z)$ to be the element of $G^{-1} \circ \Phi \circ F(z)$ in $\A_j'$.

We now show $\widehat{\Phi}$ is a homeomorphism. Let $U$ be an open disc in $\CC$, which is disjoint from the critical points. We will show $\widehat{\Phi}^{-1}(U)$ is open. By commutativity, $\widehat{\Phi}^{-1}(U)$ is contained in $F^{-1}(\Phi^{-1}(G(U)))$. Since $G$ is a rational map, $G(U)$ is an open set, and by continuity of $\Phi$, $\Phi^{-1}(G(U))$ is also open. Furthermore, $\Phi^{-1}(G(U))$ is disjoint from the two critical values, and so $F^{-1}(\Phi^{-1}(G(U)))$ is made up of $d$ disjoint open sets. By the construction of $\widehat{\Phi}$ given above, we see that precisely one of these is the set $\widehat{\Phi}^{-1}(U)$, which is therefore open. This proves continuity for the non-critical points.

If $U$ is a disc which contains one critical point, then a similar argument shows that $F^{-1}(\Phi^{-1}(G(U)))$ is a single simply connected open set, and so $\widehat{\Phi}$ is continuous at the critical points. Hence $\widehat{\Phi}$ is a continuous bijection from a compact space to a Hausdorff space, and so is a homeomorphism. \end{proof}

\begin{lem}\label{newPsi}
There exists a homeomorphism $\widehat{\Psi} \colon \CC \setminus \D \to \CC \setminus \D$ so that
	\[
             \xymatrix{       \CC \setminus \D \ar[rr]^{\widehat{\Psi}} \ar[dd]_{\tilde{\eta}_F}    && \CC \setminus \D \ar[dd]^{\tilde{\eta}_G}
            \\ \\
                \CC \ar[rr]_{\widehat{\Phi}}                       && \CC }
        \]
commutes.
\end{lem}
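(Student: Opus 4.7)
The plan is to construct $\widehat{\Psi}$ directly in two pieces, by a closed formula on the open exterior and by matching it with the boundary homeomorphism $\psi$ of Lemma~\ref{lemma3} on $\partial \D$. On $\CC \setminus \overline{\D}$, define
\[
\widehat{\Psi}(z) = \eta_G^{-1} \circ \widehat{\Phi} \circ \eta_F(z).
\]
This is well-defined and a homeomorphism of $\CC \setminus \overline{\D}$ onto itself because $\widehat{\Phi}$ is a homeomorphism of $\CC$ (Proposition~\ref{newPhi}) whose restriction to $X_F$ is $\phi \colon X_F \to X_G$; hence $\widehat{\Phi}$ must map $\CC \setminus X_F$ bijectively onto $\CC \setminus X_G$, where $\eta_F$ and $\eta_G$ are genuine homeomorphisms. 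Commutativity of the lemma's diagram on the exterior piece is then immediate from the definition.

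On $\partial \D$, set $\widehat{\Psi}|_{\partial \D} = \psi$, the homeomorphism constructed in Lemma~\ref{lemma3}. The commutativity condition on the boundary reads $\tilde{\eta}_G \circ \widehat{\Psi}|_{\partial\D} = \widehat{\Phi}|_{X_F} \circ \tilde{\eta}_F|_{\partial\D}$, and since $\widehat{\Phi}|_{X_F} = \phi$ by the construction in Proposition~\ref{newPhi}, this is exactly the defining property of $\psi$ established in Lemma~\ref{lemma3}. The two pieces therefore satisfy the required identity separately, and together give a bijection $\widehat{\Psi} \colon \CC \setminus \D \to \CC \setminus \D$.

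The substantive step—and the only real obstacle—is continuity across $\partial \D$; once this is secured, $\widehat{\Psi}$ will be a continuous bijection from a compact space to a Hausdorff space and hence a homeomorphism. For a sequence $z_n \in \CC \setminus \overline{\D}$ with $z_n \to \zeta \in \partial \D$, continuity of $\tilde{\eta}_F$ and $\widehat{\Phi}$ yields $\widehat{\Phi}(\eta_F(z_n)) \to \phi(\tilde{\eta}_F(\zeta)) \in X_G$; what must be shown is that the sequence $\tilde{\eta}_G^{-1}(\widehat{\Phi}(\eta_F(z_n)))$ converges to the particular boundary preimage that $\psi$ selects at $\zeta$. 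Because $\widehat{\Phi}$ is an orientation-preserving homeomorphism of $\CC$ whose restriction to the star realises exactly the combinatorial rearrangement $\phi$, it carries each local side of an arm of $X_F$ to the corresponding side of the image arm of $X_G$. The verification then proceeds through the three cases of Lemma~\ref{lemma3}—namely $\zeta$ in the interior of one of the open arcs $(\xi_{i-1},p_i)$ or $(p_i,\xi_i)$, $\zeta = p_i$, and $\zeta = \xi_i$—and in each case the branch of $\tilde{\eta}_G^{-1}$ dictated by the approach direction matches the branch chosen in constructing $\psi$. The hard part is precisely this bookkeeping of approach sides: it is what forces the two pieces of $\widehat{\Psi}$ to dovetail continuously along $\partial \D$, and it is the reason the argument must invoke the same case analysis already carried out in Lemma~\ref{lemma3}.
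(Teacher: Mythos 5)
Your proposal is correct and follows essentially the same route as the paper: define $\widehat{\Psi}$ as $\eta_G^{-1} \circ \widehat{\Phi} \circ \eta_F$ off the boundary and as $\psi$ on $\partial\D$, note commutativity is automatic since $\widehat{\Phi}|_{X_F} = \phi$ and $\tilde{\eta}_G \circ \psi = \phi \circ \tilde{\eta}_F$, and reduce everything to continuity across $\partial\D$, concluded by the compact-to-Hausdorff bijection argument. Your branch-matching via the side of approach to each arm is the same mechanism the paper expresses as preservation of the cyclic ordering induced by $\phi$ and $\psi$.
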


\begin{proof}
This is analogous to the proof of Proposition~\ref{p:extension}. We define
	\[
	\widehat{\Psi}(z)  = \left\{
                        \begin{array}{ll}
                            \eta_G^{-1} \circ \widehat{\Phi} \circ \eta_F(z) ,  & \hbox{$z \in \CC \setminus \overline{\D}$;} \\
                            \psi (z),                  & \hbox{$z \in \partial \D$.}
                        \end{array}
             \right.
\]
Again, continuity on $\CC \setminus \overline{\D}$ is assured by the fact that $\Psi$ is defined as a composition of homeomorphisms there. So we only need to check continuity on the boundary, $\partial \D$.

The cyclic ordering induced by the Riemann maps and $\widehat{\Phi}$ is induced by the cyclic ordering from $\phi$ and $\psi$. Notice, if $x_n \to x \in X_F$, $\lim_{n \to \infty}(\widehat{\Phi}(x_{n})) = \phi(x)$. Also $\psi = \widehat{\Psi}|_{\partial \D}$ is chosen as the homeomorphism of the circle which satisfies $\phi \circ \tilde{\eta}_F(z) = \tilde{\eta}_G \circ \psi(z)$ for all $z$. This means that $\psi(z)$ is the element of $\tilde{\eta}_G(\tilde{\eta}_G^{-1} \circ \widehat{\Phi} \circ \tilde{\eta}_F(z))$ which maintains the cyclic ordering of the points. Hence any sequence converging to $z$ must converge to $\psi(z)$ under $\widehat{\Psi}$ (else we would lose the ordering), and so the given boundary values for $\widehat{\Psi}$ give continuity. Once again, since $\widehat{\Psi}$ is a continuous bijection from a compact space to a Hausdorff space, it is a homeomorphism. \end{proof}

We are now ready to prove Theorem~\ref{Fixedcase}.

\begin{proof}[Proof of Theorem~\ref{Fixedcase}]
 We claim that we have the following commutative diagram.
\[
		\xymatrix{
				X_{F} \ar[ddrr]^{F} \ar[rrrrrr]^{\phi} &&&&&& X_G  \ar[ddll]_G \\
				&&& *+[o][F-]{1}  &&&\\
				&&	X_F \ar[rr]^{\phi} && X_G &&\\ 
				&  & & *+[o][F-]{2} & &&   \\
				&&	\partial \D \ar[uu]^{\tilde{\eta}_{F}} \ar[rr]_{\psi} && \partial \D \ar[uu]_{\tilde{\eta}_G} && \\
				&&& *+[o][F-]{3} &&& \\
				\partial \D \ar[rrrrrr]_{\psi} \ar[uuuuuu]^{\tilde{\eta}_F} &&&&&& \partial \D \ar[uuuuuu]_{\tilde{\eta}_G}  } 
\]
Each part of this diagram is justified as follows.	
	\begin{enumerate}
		\item{$\phi$ is a conjugacy so $\phi \circ F = G \circ \phi$. The existence of $\phi$ is given by Lemma~\ref{lemma1}.}
		\item{Lemma~\ref{lemma3}}
		%\item{Corollary~\ref{cor3a}}
		%\item{Corollary~\ref{cor3a}}
		\item{Lemma~\ref{lemma3}.}
	\end{enumerate}

This diagram extends to give a commutative diagram:

\[
\xymatrix{
	(\CC,X_{F}) \ar[ddr]^{F} \ar[rrrr]^{(\widehat{\Phi},\phi)} &&&& (\CC,X_G) \ar[ddl]_G \\
	&& *+[o][F-]{1}  &&\\
	&	(\CC,X_F) \ar[rr]^{(\Phi,\phi)} && (\CC,X_G) &\\ 
	&   & *+[o][F-]{2} & &   \\
	&	(\CC \setminus \D,\partial \D) \ar[uu]^{\tilde{\eta}_{F}} \ar[rr]_{(\Psi,\psi)} && (\CC \setminus \D,\partial \D) \ar[uu]_{\tilde{\eta}_G} & \\
	&& *+[o][F-]{3} && \\
	(\CC \setminus \D,\partial \D) \ar[uuuuuu]^{\tilde{\eta}_F} \ar[rrrr]_{(\widehat{\Psi},\psi)} &&&& (\CC \setminus \D,\partial \D) \ar[uuuuuu]_{\tilde{\eta}_G}  }				
	\]
where the notation $(\Phi, \phi) \colon (\CC,X_F) \to (\CC,X_G)$ means the map  is defined as $\Phi$ on $\CC$, and its restriction to $X_F$ is $\phi$. The other maps are defined analogously.
This diagram is justified by
	\begin{enumerate}
		\item{Proposition~\ref{newPhi}.}
		\item{Proposition~\ref{p:extension}}
		\item{Lemma~\ref{newPsi}}
	\end{enumerate}
	
We now remark that the maps $\Phi$ and $\widehat{\Phi}$ agree on $X_F$, and so agree on the set $P_F \subset X_F$. Furthermore, $\Psi$ is isotopic to $\widehat{\Psi}$, by the Alexander trick, and so we see that the commutative diagram above (and the fact that $\tilde{\eta}_F$ and $\tilde{\eta}_G$ are homeomorphisms on $\CC \setminus \overline{\D}$) gives us that $\Phi$ and $\widehat{\Phi}$ are isotopic rel $X_F$ and so isotopic rel $P_F$. Hence $F$ and $G$ are Thurston equivalent.
\end{proof}

\section{Thurston equivalence for the period two case}\label{s:per2}

The case where the clusters are of period two is more difficult than the fixed case. In the previous section, we were able to use the Alexander Trick, which allowed us to get the isotopy between the two homeomorphisms $\Phi$ and $\widehat{\Phi}$. There are now two stars. Denote the star containing the first critical point of $F$ by $X_F^1$ and the one containing the second critical point $X_F^{2}$. We similarly define the stars $X_G^1$ and $X_G^2$. The following result is entirely analogous to Theorem~\ref{Per2case}. The added complexity comes about because we no longer have an analogue to the isotopy version of Alexander's Trick. Informally, this is because the space $\CC \setminus (X_1 \cup X_2)$ is no longer simply connected, and so is not conformally equivalent to a disk. Indeed, it is conformally equivalent to an annulus. The simplicity of Theorem~\ref{Per2case} resulted from the mapping class group of the disk being trivial. For the annulus, the mapping class group is now $\Z$, and so we see that there is an extra difficulty when we consider the period two cluster case.

We also emphasise that, unlike in the previous section, we only prove the result in the case that the rational maps $F$ and $G$ have degree $2$. There is strong evidence to suggest that, in fact, the theorem is not true in the higher degree case.. A number of the proofs in this section have proofs completely analogous to those in the previous section. We will proceed as follows. 

\begin{itemize}
  \item{First we will construct a pair of homeomorphisms $\Phi$ and $\widehat{\Phi}$ which satisfy $\Phi \circ F = G \circ \widehat{\Phi}$}
  \item{We then modify $\Phi$ and $\widehat{\Phi}$ to homeomorphisms $\Phi_1$ and $\widehat{\Phi}_1$ which still satisfy $\Phi \circ F = G \circ \widehat{\Phi}$ and also agree on the stars $X_F^1 \cup X_F^2$.}
  \item{We finally modify $\Phi_1$ and $\widehat{\Phi}_1$ to homeomorphisms $\Phi_2$ and $\widehat{\Phi}_2$ which still satisfy $\Phi \circ F = G \circ \widehat{\Phi}$, agree on the stars $X_F^1 \cup X_F^2$ and furthermore are isotopic rel $P_F$.}
\end{itemize}
 
This will mean the homeomorphisms $\Phi_2$ and $\widehat{\Phi}_2$ satisfy the conditions of the homeomorphisms in the definition of Thurston equivalence of $F$ and $G$, and so $F$ and $G$ will be equivalent. We begin with an analogue to Lemma~\ref{lemma1}.

\begin{lem}\label{lemma11}
	There exists a conjugacy $\phi \colon (X^1_F \cup X^2_F) \to (X^1_G \cup X^2_G)$ so that
\begin{equation}\label{e:phi}
 \phi \circ F = G \circ \phi.
\end{equation}
\end{lem}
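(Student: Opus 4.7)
The plan is to mimic the construction in Lemma~\ref{lemma1}, gluing B\"ottcher conjugacies across two stars rather than one. By Lemma~\ref{p:Marysresult} the critical points of $F$ lie in distinct clusters, say $c_1\in\mathcal{C}_1$ and $c_2\in\mathcal{C}_2$, and likewise for $G$. The shared combinatorial data $(\rho,\delta)$ permits a parallel anticlockwise labelling of the $2n$ critical orbit Fatou components around each cluster: label the arms of $X^1_F$ as $\ell^1_0,\ldots,\ell^1_{2n-1}$ with $\ell^1_0$ terminating at $c_1$ and $\ell^1_\delta$ terminating at $F(c_2)$, and then propagate to $X^2_F$ by $F$-equivariance, since each arm of $X^2_F$ is the $F$-image of a unique arm of $X^1_F$. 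Apply the identical labelling to $G$ (primed notation); this is consistent precisely because $(\rho,\delta)$ agrees for both maps.

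Next, invoke B\"ottcher's theorem on each critical orbit Fatou component $U^k_i$ of $F$ to obtain a conformal isomorphism $h^k_{F,i}\colon U^k_i\to\D$ sending the 0-internal ray $J^k_i$ onto $[0,1)$ and conjugating the first return map of the component to $z\mapsto z^2$. The initial choice of $h^k_{F,0}$ on each component containing a critical point is free, but once fixed, consistency with $F$ forces an inductive definition along the orbit: writing $F(U^k_i)=U^{k''}_{i'}$, one requires
\[
  h^{k''}_{F,i'}\circ F|_{U^k_i} = h^k_{F,i}
\]
at non-critical steps, and the analogous squaring relation at the single step issuing from a critical component. Perform the parallel construction for $G$ under the matching labels.

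Now define $\phi\colon X^1_F\cup X^2_F\to X^1_G\cup X^2_G$ by $\phi(c^k)=(c^k)'$ for $k=1,2$ and $\phi|_{J^k_i}=(h^k_{G,i})^{-1}\circ h^k_{F,i}$ on each 0-internal ray. Each restriction is a homeomorphism from $J^k_i$ onto $J^k_i{}'$, and the pieces glue continuously at the cluster points because $h^k_{F,i}(z)\to 1$ exactly as $z\to c^k$ along $J^k_i$. The conjugacy equation $\phi\circ F=G\circ\phi$ then reduces, on each ray, to the observation that $F$ and $G$ act by the same prescription (identity on B\"ottcher parameter, or $t\mapsto t^2$ at the critical step) between correspondingly indexed image rays, which is guaranteed by the coherent coordinate choices above.

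The main obstacle is the coherent propagation of B\"ottcher coordinates along each critical orbit: individual steps of $F$ are not themselves normalised to a standard disc self-map, since only the $2n$-th iterate is the subject of B\"ottcher's theorem. This is handled by freely fixing $h^k_{F,0}$ on each critical component and then inductively defining $h^k_{F,i}$ via the push-forward relation above; once the symmetric procedure is carried out for $G$, the B\"ottcher coordinates for $F$ and $G$ are identified step-by-step under $\phi$, so that $\phi$ is a genuine conjugacy and not merely a semiconjugacy for the first return map.
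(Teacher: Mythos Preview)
Your argument is correct and follows essentially the same approach as the paper: the paper's own proof of this lemma simply refers back to Lemma~\ref{lemma1} (B\"ottcher coordinates on each critical orbit Fatou component, glued at the cluster points, with the matching of arms dictated by the shared combinatorial data), and you have reproduced exactly that, adapted to two stars. If anything, you are more explicit than the paper about the coherent propagation of B\"ottcher coordinates along each critical orbit so that $\phi$ intertwines $F$ and $G$ step by step rather than only their first return maps; the paper leaves this implicit in its reference to Lemma~\ref{lemma1}.
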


\begin{proof}
	The proof of this lemma is essentially the same as that for Lemma~\ref{lemma1}. Notice in particular that we require the stars to be marked, so that we know which one contains the first critical point in the definition of combinatorial displacement.
\end{proof}

The next result is the first point where we notice a difference with the previous section. By the Riemann mapping theorem for simply connected regions (not equal to the whole of $\C$), we saw that the complement to the star in the sphere will be conformally isomorphic to the unit disk. However, in this case, we use the fact that the complement to the stars will be conformally equivalent to some annulus $A$. However, two annuli $A_1$ and $A_2$ are conformally equivalent to each other if and only if the ratio of the radii of their boundary circles are the same. In other words, if we normalise so that the radius of the inner boundary circle is 1, we see that two annuli are conformally equivalent if and only if their outer boundary circles have the same radii.

\begin{prop}
	Let $F$ be a rational map with a period two cluster cycle. Then there exists a conformal map $\eta_F \colon A_F \to \CC \setminus (X^1_F \cup X^2_F)$, where $A_F$ is an annulus. Furthermore, this conformal equivalence can be extended to a continuous function $\tilde{\eta}_F \colon \overline{A}_F \to \CC$.
\end{prop}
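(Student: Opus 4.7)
The plan is to verify two properties of the open set $\Omega_F := \CC \setminus (X_F^1 \cup X_F^2)$: that it is doubly connected, and that each of its two boundary components is locally connected. The existence of $\eta_F$ will then follow from the uniformization theorem for doubly connected planar domains, and the continuous extension $\tilde{\eta}_F$ from a Carath\'eodory-type argument applied to each boundary circle of $A_F$.

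First I would establish that $\Omega_F$ is doubly connected. Each star $X_F^i$ is the union of its cluster point with a finite collection of closed $0$-internal rays, so it is a compact, connected, tree-like subset of $S^2 = \CC$. The complement in $S^2$ of such a tree is an open topological disk; removing the second disjoint tree $X_F^2$ from this disk then produces a domain whose fundamental group is $\Z$, i.e.\ a doubly connected domain. Next I would invoke the classification of doubly connected planar domains: every such domain is conformally equivalent to a unique round annulus $A_F = \{z \in \C : 1 < |z| < R_F\}$, where $R_F > 1$ records the conformal modulus of $\Omega_F$. This supplies the required conformal isomorphism $\eta_F \colon A_F \to \Omega_F$.

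For the extension, the essential input is that each $X_F^i$ is a finite union of closed arcs meeting at a single point and hence locally connected. The cleanest route is to pass to the universal cover: lifting $\eta_F$ via the exponential map yields a conformal isomorphism from a vertical strip $\{w \in \C : 0 < \mathrm{Re}\,w < \log R_F\}$ onto a simply connected cover of $\Omega_F$, in which each vertical boundary line of the strip covers one of the locally connected stars $X_F^i$. The classical Carath\'eodory extension theorem then extends this lift continuously to the closure of the strip. Equivariance under the deck transformation $w \mapsto w + 2\pi i$ is automatic by uniqueness of the lift, so the extension descends to the desired continuous map $\tilde{\eta}_F \colon \overline{A}_F \to \CC$.

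The main obstacle I would expect is precisely this last step. The existence of the conformal equivalence to an annulus is a classical consequence of uniformization; it is the Carath\'eodory extension in the doubly connected setting that needs a little care. The strip-lifting argument above is what makes this tractable, since it reduces the question to the familiar simply connected case and shows that continuity on each boundary circle of $A_F$ follows entirely from local connectedness of the corresponding star $X_F^i$.
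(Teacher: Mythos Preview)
Your proposal is correct and follows the same approach as the paper: the paper's proof simply records that the conformal equivalence with an annulus is a standard result and that the continuous extension follows from local connectedness of the stars. Your write-up supplies the details (double connectedness of $\CC \setminus (X_F^1 \cup X_F^2)$, finiteness of the modulus, and the strip lift for the Carath\'eodory step) that the paper leaves implicit.
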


\begin{proof}
	The conformal equivalence with the annulus is a standard result and the continuous extension follows from the fact that the stars are locally connected.
\end{proof}

We now set our notation so that $A_F = \{ z \, \colon \, 1 < |z| < e^{R_F} \}$, $A_G = \{ z \, \colon \, 1  <   |z|  <  e^{R_G} \}$ and $\eta_F \colon A_F \to \CC \setminus ( X^1_F \cup X^2_F )$ and $\eta_G \colon A_G \to \CC \setminus (X^1_G \cup X^2_G)$ are conformal equivalences. %We will also take advantage of the fact that the annuli $A_F$ and $A_G$ are respectively covered by the strips $\Sigma_F = \{ a + bi \, \colon \, a \in [0,R_F] \}$ and $\Sigma_G = \{ a + bi \, \colon \, a \in [0,R_G] \}$ by the map $z \mapsto \exp z$.

\begin{prop}\label{p:Psi2}
	Let $\psi$ be a homeomorphism defined on the boundary of the annulus $A_F$ mapping to the boundary of the other annulus $A_G$, which preserves the orientation on each boundary circle. Then $\psi$ can be extended to a homeomorphism $\Psi \colon A_F \to A_G$.
\end{prop}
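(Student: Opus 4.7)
The plan is to construct the extension explicitly by radial interpolation in polar coordinates. Since $\psi$ is continuous and $\partial A_F$, $\partial A_G$ each have two connected components, $\psi$ sends each boundary circle of $A_F$ homeomorphically to one boundary circle of $A_G$. Consider first the case where $\psi$ preserves the inner and outer boundaries, so that
\[
  \psi(e^{i\theta}) = e^{i\alpha(\theta)}, \qquad \psi(e^{R_F}e^{i\theta}) = e^{R_G}e^{i\beta(\theta)},
\]
for continuous, strictly increasing lifts $\alpha,\beta\colon\R\to\R$ satisfying $\alpha(\theta+2\pi) = \alpha(\theta)+2\pi$ and $\beta(\theta+2\pi) = \beta(\theta)+2\pi$. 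Such lifts exist by the orientation-preserving hypothesis; the additive ambiguity is a multiple of $2\pi$, which I fix by requiring, say, $|\alpha(0)-\beta(0)|<2\pi$.

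Next, for $z=re^{i\theta}$ with $r\in[1,e^{R_F}]$, set $t=(\log r)/R_F\in[0,1]$ and define
\[
  \Psi(z) := \exp(tR_G)\cdot\exp\bigl(i((1-t)\alpha(\theta)+t\beta(\theta))\bigr).
\]
This is well-defined on the closed annulus $\overline{A_F}$ because both factors are $2\pi$-equivariant in $\theta$; it is continuous in $(r,\theta)$; and it restricts to $\psi$ on $\partial A_F$ by construction. For each fixed $t\in[0,1]$, the map $\theta\mapsto(1-t)\alpha(\theta)+t\beta(\theta)$ is a strictly increasing homeomorphism of $\R$ commuting with translation by $2\pi$, hence descends to an orientation-preserving homeomorphism of $S^1$. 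Combined with the strictly increasing radial scaling $r\mapsto\exp((\log r)R_G/R_F)$, this exhibits $\Psi$ as a continuous bijection $\overline{A_F}\to\overline{A_G}$, hence a homeomorphism (a continuous bijection from a compact space to a Hausdorff space), and its restriction to $A_F$ gives the required homeomorphism onto $A_G$.

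If instead $\psi$ swaps the inner and outer boundaries, the same construction works after reversing the radial factor, setting the image radius at parameter $t$ to be $\exp((1-t)R_G)$ rather than $\exp(tR_G)$, and relabelling the lifts accordingly. The proposition is really a routine instance of the boundary-extension problem for annuli, and no genuine obstacle arises; the only care needed is the bookkeeping of angle lifts and the separation into the two cases for boundary behaviour. I expect the slight subtlety in the actual writeup to be ensuring the lifts $\alpha,\beta$ are chosen compatibly so that the linear interpolation remains a homeomorphism of $S^1$ for every intermediate $t$, which is precisely why the $|\alpha(0)-\beta(0)|<2\pi$ normalisation is imposed.
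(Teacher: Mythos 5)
Your construction is correct, but it takes a different route from the paper: the paper disposes of this proposition in one line by citing Youngs' theorem that a boundary homeomorphism of a $2$-manifold always extends over the manifold, whereas you give a self-contained, explicit extension by lifting the two boundary circle maps to strictly increasing $2\pi$-equivariant maps $\alpha,\beta$ of $\R$ and interpolating linearly in the radial coordinate. Your argument is elementary and has the virtue of producing a concrete formula (an annulus analogue of the Alexander trick), and your checks — equivariance for well-definedness, strict monotonicity of $(1-t)\alpha+t\beta$ for each $t$, continuous bijection from a compact space to a Hausdorff space — are all sound; the citation route buys brevity and generality (it needs no coordinates and works for any surface with boundary), which is all the paper requires since the later argument deliberately absorbs the non-uniqueness of the extension into the mapping class group of the annulus, correcting by Dehn twists. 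One small remark: your normalisation $|\alpha(0)-\beta(0)|<2\pi$ is harmless but not actually needed for $\Psi$ to be a homeomorphism — any choice of lifts works, and different choices simply produce extensions differing by Dehn twists, which is exactly the ambiguity the paper later exploits; also, the case in which $\psi$ swaps the two boundary circles never arises in the application, since $\psi$ is induced by a map sending corresponding stars to one another.
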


\begin{proof}
By the results of \cite{Youngs}, it is always possible to extend a homoeomorphism on the boundary of a 2-manifold to the whole manifold.
\end{proof}

\begin{lem}\label{l:Phi2}
	The homeomorphism $\Psi$ induces a homeomorphism $\Phi \colon \CC \to \CC$. Moreover, $\Phi |_{X^1_F \cup X^2_F} = \phi$.
\end{lem}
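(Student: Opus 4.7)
The plan is to mirror the proof of Proposition~\ref{p:extension}, replacing the Alexander trick with the annular extension given by Proposition~\ref{p:Psi2} and the fact that $\tilde{\eta}_F$ and $\tilde{\eta}_G$ are quotient maps. Before constructing $\Phi$, I first need to pin down the boundary homeomorphism $\psi$ from which $\Psi$ is built. Following the recipe of Lemma~\ref{lemma3}, on each of the two boundary circles of $\partial A_F$ I would define $\psi$ arc by arc between preimages of the cluster points and of the post-critical points, choosing in each case the unique preimage under $\tilde{\eta}_G$ of $\phi \circ \tilde{\eta}_F$ that preserves cyclic order. The three-case continuity check of Lemma~\ref{lemma3} carries over verbatim and shows that $\psi$ is a homeomorphism which preserves orientation on each boundary circle, so Proposition~\ref{p:Psi2} applies and delivers an extension $\Psi \colon A_F \to A_G$ with $\Psi|_{\partial A_F} = \psi$.

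With $\Psi$ fixed, I would define
\[
\Phi(z) = \begin{cases} \eta_G \circ \Psi \circ \eta_F^{-1}(z), & z \in \CC \setminus (X^1_F \cup X^2_F), \\ \phi(z), & z \in X^1_F \cup X^2_F. \end{cases}
\]
The second clause gives $\Phi|_{X^1_F \cup X^2_F} = \phi$ for free, so the ``moreover'' statement is immediate. Bijectivity is also immediate: on the annular complement $\Phi$ is a composition of bijections between $\CC \setminus (X^1_F \cup X^2_F)$ and $\CC \setminus (X^1_G \cup X^2_G)$, on the stars it equals the homeomorphism $\phi$, and the two pieces cover $\CC$ with disjoint images.

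The real work is continuity. I would argue via a quotient-map argument exactly parallel to Proposition~\ref{p:extension}. Since $\overline{A}_F$ is compact and $\CC$ Hausdorff, the continuous surjection $\tilde{\eta}_F \colon \overline{A}_F \to \CC$ is a quotient map, and likewise for $\tilde{\eta}_G$. The continuous map $\tilde{\eta}_G \circ \Psi \colon \overline{A}_F \to \CC$ descends through the quotient $\tilde{\eta}_F$ to a continuous map on $\CC$ as soon as it is constant on each fibre of $\tilde{\eta}_F$. The only non-trivial fibres sit on $\partial A_F$, and on $\partial A_F$ the boundary identity $\tilde{\eta}_G \circ \psi = \phi \circ \tilde{\eta}_F$ built into $\psi$ delivers precisely this. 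The resulting quotient map agrees with $\Phi$ by construction, so $\Phi$ is continuous; as a continuous bijection from the compact space $\CC$ to the Hausdorff space $\CC$, it is a homeomorphism.

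The main obstacle is exactly the fibre-compatibility used in the last paragraph: without choosing $\psi$ so as to intertwine $\tilde{\eta}_F$ with $\tilde{\eta}_G$ via $\phi$, the quotient-map descent fails, and the map induced on $\CC$ is at best only a homeomorphism away from $X^1_F \cup X^2_F$ that does not match $\phi$ on the stars. Once this compatibility is secured in the construction of $\psi$, everything else is formal. It is worth noting that the argument uses only the orientation-preserving extension property of Proposition~\ref{p:Psi2} and does \emph{not} control the isotopy class of $\Psi$ — this is the issue that will have to be addressed separately in the subsequent modification of $\Phi$ to $\Phi_2$.
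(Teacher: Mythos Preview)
Your proposal is correct and follows essentially the same approach as the paper: the paper defines $\Phi$ by exactly the same two-piece formula and then simply invokes the quotient-map argument of Proposition~\ref{p:extension} for continuity. If anything, you are more careful than the paper, since you make explicit the compatibility condition $\tilde{\eta}_G \circ \psi = \phi \circ \tilde{\eta}_F$ that the particular $\psi$ must satisfy (the paper states Proposition~\ref{p:Psi2} for an arbitrary boundary homeomorphism and then tacitly applies it to the correct one).
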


\begin{proof}
	We define
\[
	\Phi(z)  = \left\{
                        \begin{array}{ll}
                            \eta_G \circ \Psi \circ \eta_F^{-1}(z) ,  & \hbox{$z \in \CC \setminus (X^{1}_F \cup X^{2}_F)$;} \\
                            \phi (z),                  & \hbox{$z \in X^{1}_F \cup X^{2}_F$.}
                        \end{array}
             \right.
\]
Clearly $\Phi$ is a bijection, and it is a homeomorphism with a similar argument as in Proposition~\ref{p:extension}.
\end{proof}

We now define the homeomorphism $\widehat{\Phi}$ to be the lifting of $\Phi$ under $F$ and $G$; that is $\widehat{\Phi} = G^{-1} \circ \Phi \circ F$.
We realise that since $G$ is quadratic, there are two ways in which we could define our map $\widehat{\Phi}$, depending on which branch of the inverse we take. In the previous chapter, we constructed the map $\widehat{\Phi}$ in Proposition~\ref{newPhi}, starting off with the observation that $\phi$ was a conjugacy on the stars $X_F$ and $X_G$, and so we could set $\widehat{\Phi}$ to equal $\phi$ on $X_F$. It was then a fortunate consequence of the fixed cluster case that this restriction still allowed us to construct the homeomorphism $\widehat{\Phi}$ which satisfied all the required properties. We are not so fortunate in the case where there is more than one cluster. In this case, it may be that, \emph{a priori}, the map $\widehat{\Phi}$ we construct that satisfies $\widehat{\Phi} = G^{-1} \circ \Phi \circ F$ may not satisfy $\widehat{\Phi}|_{(X^1_F \cup X^2_F)} = \phi$. However, we notice that it is at least possible to set $\widehat{\Phi}|_{X_F^1} = \Phi|_{X_F^1}$, since 
\begin{align*}
	z \in X_F^1 \, \Longrightarrow 	& \,	F(z) \in X_F^2 \\
									\Longrightarrow	&	\, 	\Phi(F(z)) = \phi(F(z)) \in X_G^2 \\
									\Longrightarrow	& \, 	G^{-1} (\phi(F(z)) \in G^{-1} \left( X_G^2 \right).
\end{align*}

The set $G^{-1}(X_G^2)$ contains the star $X_G^1$ and the non-periodic pre-image stars of $X_G^2$. Of course, we are at liberty to pick the lift (the branch of $G^{-1}$) that gives us $\widehat{\Phi}(X_F^1) = X_G^1$. This choice will uniquely define our choice of $G^{-1}$ and so our map $\widehat{\Phi}$. However, this choice may not give us $\widehat{\Phi}(X_F^2) = X_G^2$, but instead will map it to the pre-image star (or pre-star, for short), and so our choice of pair $(\Phi, \widehat{\Phi})$ may not satisfy the requirements for the homeomorphisms in Thurston's theorem. However, we will show that we can carry out some suitable modifications to get two homeomorphisms which do satisfy the requirements. %In higher degree cases, we can modify so that the maps $\Phi$ and $\widehat{\Phi}$ agree on the stars, but in this case it may not be true that they are isotopic relative to the postcritical set of $F$. This will be shown in the next section.

The reader may be suspicious about the above claim that the conditions $\widehat{\Phi} = G^{-1} \circ \Phi \circ F$ and $\widehat{\Phi}(X_F^1) = X_G^1$ are enough to uniquely define the homeomorphism. We briefly explain why this is the case below. Let the critical points of $F$ be $c_1 \in X_F^1$ and $c_2 \in X_F^2$. Let $\gamma$ be a path between $v_1 = F(c_1)$, the critical value in $X_F^2$ and $v_2$, the critical value in $X_F^1$ and with $\gamma \cap ( X_F^1 \cup X_F^2 )=\varnothing$. Then $F^{-1}(\gamma)$ is made up of $2$ curves from $c_1$ to $c_2$, and these split the sphere into $2$ regions, which we label anticlockwise around the critical point $c_1$ by $\A_1$ and $\A_2$, chosen so that $X_F^1 \subset \A_1$. The map $F_i \colon \A_i \to \CC \setminus \gamma$, the restriction of $F$ to $\A_i$, is a homeomorphism for each $i$. This can be extended continuously to the boundary. With a similar argument we see that $\Phi(\gamma) =: \gamma'$ is a path between the critical points $v_1'$ and $v_2'$. Hence $G^{-1}(\gamma')$ splits the sphere into $2$ regions, which we similarly label anticlockwise by $\A_1'$ and $\A_d'$, with $\A_1'$ being the region component containing $X^1_G$. Again the restriction $G_i \colon \A_i' \to \CC \setminus \gamma'$ is a homeomorphism and it can be extended continuously to the boundary. We now define $\widehat{\Phi}$ by mapping $\A_i$ onto $\A_i'$ so that $\widehat{\Phi} = G^{-1} \circ \Phi \circ F$.

\begin{lem}
	$\widehat{\Phi}$ is a homeomorphism. 
\end{lem}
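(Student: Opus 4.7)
The plan is to verify that $\widehat{\Phi}$, as constructed piecewise by $\widehat{\Phi}|_{\A_i} = G_i^{-1}\circ\Phi\circ F_i$, is a continuous bijection from $\CC$ to $\CC$; because both are compact Hausdorff spaces, this will automatically give that it is a homeomorphism. Three things have to be checked: that the two piecewise definitions agree on the shared boundary $F^{-1}(\gamma)=\alpha\cup\beta$ (so that $\widehat{\Phi}$ is well-defined), that the resulting function is continuous across this boundary, and that it is bijective.

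On each open region $\A_i$, $\widehat{\Phi}$ is built as the composition of three homeomorphisms, namely $F_i\colon\A_i\to\CC\setminus\gamma$, $\Phi\colon\CC\setminus\gamma\to\CC\setminus\gamma'$ and $G_i^{-1}\colon\CC\setminus\gamma'\to\A_i'$, so $\widehat{\Phi}|_{\A_i}$ is itself a homeomorphism of $\A_i$ onto $\A_i'$, and each factor extends continuously to the closure $\overline{\A_i}$ because $\gamma$ and $\gamma'$ are arcs and the closures are topological disks. For the gluing on $F^{-1}(\gamma)$, I would first handle the critical points: since $F(c_i)=v_i$ lies in $X_F^1\cup X_F^2$ we have $\Phi(v_i)=\phi(v_i)=v_i'$, and $G^{-1}(v_i')=\{c_i'\}$, so both branches of $G^{-1}$ force the unambiguous value $\widehat{\Phi}(c_i)=c_i'$. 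At a non-critical boundary point $z\in\alpha\cup\beta$, the two candidate values lie in the two-element set $G^{-1}(\Phi(F(z)))\subset \alpha'\cup\beta'$, so the question reduces to showing that the branch choices made on $\A_1$ and on $\A_2$ (pinned down by the convention $\widehat{\Phi}(X_F^1)=X_G^1$ and the orientation-preservation of $\Phi$) are mutually compatible along both $\alpha$ and $\beta$.

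The cleanest way to establish this compatibility is a covering-space lifting argument. The restriction of $G$ to $\CC\setminus\{c_1',c_2'\}$ is a degree-two covering onto $\CC\setminus\{v_1',v_2'\}$, and similarly $\Phi\circ F$ restricted to $\CC\setminus\{c_1,c_2\}$ is a degree-two covering onto the same base; both correspond to the unique index-two subgroup of $\pi_1(\CC\setminus\{v_1',v_2'\})$, so the covering-space lifting theorem produces a unique homeomorphism $\widehat{\Phi}_0\colon\CC\setminus\{c_1,c_2\}\to\CC\setminus\{c_1',c_2'\}$ satisfying $G\circ\widehat{\Phi}_0=\Phi\circ F$ and agreeing with the chosen branch on $\A_1$. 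By uniqueness, $\widehat{\Phi}_0$ must coincide with the piecewise formula $G_i^{-1}\circ\Phi\circ F_i$ on each $\A_i$, so the pieces fit together continuously across both arcs $\alpha$ and $\beta$; extending by $c_i\mapsto c_i'$ is continuous because $\Phi\circ F$ and $G$ both have local degree $2$ at the corresponding critical points.

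Bijectivity is then automatic, since each $\widehat{\Phi}|_{\overline{\A_i}}$ is a homeomorphism onto $\overline{\A_i'}$ and the two images cover $\CC$ with matched boundary behaviour; combined with continuity this makes $\widehat{\Phi}$ a continuous bijection of a compact Hausdorff space, and hence a homeomorphism. The main obstacle is the boundary matching: the local data on each region $\A_i$ alone does not force the two branches of $G^{-1}$ to be compatible along both arcs of $F^{-1}(\gamma)$, and the substantive content of the proof is that the global degree-two covering structure (equivalently, that $\gamma$ was chosen so that $\Phi\circ F$ and $G$ present the same monodromy) guarantees the branches glue. Everything else — the closure-to-closure extension of each piece and the compact-to-Hausdorff promotion to a homeomorphism — is routine.
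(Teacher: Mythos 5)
Your proof is correct, but it reaches the conclusion by a genuinely different mechanism than the paper. The paper simply points back to Proposition~\ref{newPhi}: there continuity is checked directly by pulling back open discs, observing that for a disc $U$ avoiding the critical points the set $F^{-1}(\Phi^{-1}(G(U)))$ splits into $d$ disjoint open pieces of which exactly one is $\widehat{\Phi}^{-1}(U)$ (the branch choices being consistent by construction), while a disc containing a critical point pulls back to a single simply connected open set; the compact-to-Hausdorff observation then finishes, exactly as in your last step. You instead isolate the real issue --- compatibility of the two branches of $G^{-1}$ along the arcs of $F^{-1}(\gamma)$ --- and resolve it by covering-space theory: since $\pi_1$ of the sphere minus the two critical values is $\Z$, both $G$ and $\Phi\circ F$ (restricted away from critical points) are coverings corresponding to the unique index-two subgroup, so a global lift exists, is a covering isomorphism, and forces the pieces to glue. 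That buys a conceptual explanation of why the gluing cannot fail and, incidentally, an argument that works verbatim in the bicritical degree-$d$ setting (the unique index-$d$ subgroup of $\Z$), consistent with the fact that the obstruction to the higher-degree theorem appears only later, at the isotopy/Dehn-twist stage, not in this lemma. One small wrinkle to tighten: uniqueness of lifts only identifies $\widehat{\Phi}_0$ with the chosen branch on $\A_1$ (where you pinned the basepoint); on $\A_2$ the restriction of $\widehat{\Phi}_0$ is \emph{some} branch of $G^{-1}\circ\Phi\circ F$, and you need injectivity of the covering isomorphism (having already used up $\A_1'$ as the image of $\A_1$) to conclude it is the branch landing in $\A_2'$, i.e.\ the piecewise formula; with that one line added, the boundary matching and the extension over $c_1,c_2$ go through as you state.
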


\begin{proof}
	The details of this proof are similar to Proposition~\ref{newPhi}.
\end{proof}

Let $\tau \subset \CC$ be a simple closed curve which is disjoint from the stars $X_G^1$ and $X_G^2$ and which intersects $\gamma'$ in only one place and so that the winding number of $\tau$ about the first cluster point $p_1'$ is $1$. Denote by $D_{\tau}$ the Dehn twist about this curve $\tau$ in the anticlockwise direction. The plan is to modify $\Phi$ to a new function $\Phi_1 = D^{\circ j}_{\tau} \circ \Phi $ so that the pair $\Phi_1$ and $\widehat{\Phi}_1 = G^{-1} \circ \Phi_1 \circ F$ equal $\phi$ on $X_F^1 \cup X_F^2$.

By construction we map $X_F^1$ onto $X_G^1$ under $\widehat{\Phi}$. If $\widehat{\Phi}(X_F^2) = X_G^2$ then we are done, so suppose not. So we have   $X_F^2 \subset \A_k$ and $X_G^2 \subset \A_{\ell}'$, with $\{k,\ell \} = \{1,2\}$. If $k \neq \ell$ then replace $\Phi$ with $\Phi_1 =  D^{\circ (\ell-k)}_{\tau} \circ \Phi$. $\Phi_1$ is a homeomorphism since it is the composition of two homeomorphisms. We get a new path $\gamma_1 = \Phi_1(\gamma)$ ($\gamma$ defined as before) and hence we can define the regions $\A_1''$ and $\A_2''$ as the connected components of the complement of $G^{-1}(\gamma_1)$ in the sphere. As before we label anticlockwise round the first critical point and set $X_G^1 \subset \A_1''$. Now define $\widehat{\Phi}_1 = G^{-1} \circ \Phi_1 \circ F$, mapping $\A_i$ onto $\A_i''$ and forming the homeomorphism in the usual way. Note we have $\Phi_1|_{X_F^1 \cup X_F^2} = \phi$. 

\begin{lem}\label{l:starsequality}
	$\widehat{\Phi}_1 (X_F^1) = X_G^1$ and $\widehat{\Phi}_1 (X_F^2) = X_G^2$. Furthermore, $\widehat{\Phi}_1 |_{X_F^1 \cup X_F^2} = \phi$.
\end{lem}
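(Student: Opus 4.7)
The plan is to verify the three assertions by tracking the action of $\widehat{\Phi}_1$ on the two stars, with the main geometric step being to identify which complementary region of $G^{-1}(\gamma_1)$ contains each star of $G$ after the Dehn twist modification.

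First, I would observe that $\Phi_1|_{X_F^1\cup X_F^2}=\phi$. The Dehn twist $D_\tau$ is supported in an annular neighbourhood of $\tau$, which lies in $\CC\setminus(X_G^1\cup X_G^2)$ by the choice of $\tau$, and $\phi$ maps $X_F^1\cup X_F^2$ into $X_G^1\cup X_G^2$, so post-composing $\Phi$ with $D_\tau^{\circ(\ell-k)}$ leaves the images there unchanged. Since $\widehat{\Phi}_1=G^{-1}\circ\Phi_1\circ F$ with the branch of $G^{-1}$ chosen so that $\A_j$ maps to $\A_j''$, and by convention $X_G^1\subset\overline{\A_1''}$, one has
\[
\widehat{\Phi}_1(X_F^1)\subset G^{-1}(\phi(F(X_F^1)))\cap\overline{\A_1''}=G^{-1}(X_G^2)\cap\overline{\A_1''}=X_G^1,
\]
which yields the first assertion. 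The same chain of inclusions applied to $X_F^2$ gives $\widehat{\Phi}_1(X_F^2)\subset G^{-1}(X_G^1)\cap\overline{\A_k''}$. Now $G^{-1}(X_G^1)$ has exactly two connected components, namely $X_G^2$ itself and a single non-periodic pre-image star, so the second assertion reduces to showing that $\overline{\A_k''}$ contains $X_G^2$ rather than the pre-star.

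This is the geometric heart of the proof and the point where the choice $j=\ell-k$ enters essentially. The curve $\tau$ was selected to separate the two critical values of $G$: $v_2'\in X_G^1$ lies inside $\tau$ and $v_1'\in X_G^2$ lies outside. Consequently $G^{-1}(\tau)$ is a single simple closed curve $\tilde\tau$ mapping $2$-to-$1$ to $\tau$, and the monodromy of the degree $2$ branched cover $G$ around $\tau$ is the nontrivial transposition of the two sheets. I would then argue that a single Dehn twist $D_\tau$ switches which component of the complement of $G^{-1}(\gamma')$ contains $X_G^2$: lifting $\gamma_1=D_\tau(\gamma')$ through $G$ produces two arcs from $c_1'$ to $c_2'$ which interchange their approach directions at $c_2'$ (the two tangent directions there differ by $\pi$ in B\"ottcher coordinates, since the local degree at $c_2'$ is $2$), combinatorially swapping the pairing of tangent directions at $c_1'$ with those at $c_2'$ and hence swapping which complementary region contains $X_G^2$, while keeping $X_G^1\subset\overline{\A_1''}$ by the labelling convention. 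Iterating this $(\ell-k)$ times places $X_G^2$ in $\overline{\A_{\ell-(\ell-k)}''}=\overline{\A_k''}$ as required; the pre-star is thereby forced into $\overline{\A_{3-k}''}$, so the inclusion $\widehat{\Phi}_1(X_F^2)\subset G^{-1}(X_G^1)\cap\overline{\A_k''}$ improves to $\widehat{\Phi}_1(X_F^2)\subset X_G^2$, with equality because $\widehat{\Phi}_1$ is a homeomorphism.

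The final assertion $\widehat{\Phi}_1|_{X_F^1\cup X_F^2}=\phi$ then follows by a uniqueness argument: $\widehat{\Phi}_1$ is a continuous map sending each star $X_F^i$ bijectively onto $X_G^i$, lifts $\phi\circ F$ through $G$, and preserves the cyclic order of arms at each cluster point by construction, so arm by arm it must coincide with the conjugacy $\phi$ built from B\"ottcher coordinates in Lemma~\ref{lemma11}. The main obstacle is justifying the sheet-swap claim above cleanly; I expect to do this by a direct local analysis of the two arcs of $G^{-1}(\gamma_1)$ near $\tilde\tau$ and near $c_2'$, rather than invoking an abstract result on lifting mapping classes through branched covers.
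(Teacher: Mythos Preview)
Your argument for the first two assertions follows the paper's strategy: the first equality is immediate from the labelling convention, and the second reduces to checking that the Dehn twist $D_\tau^{\circ(\ell-k)}$ moves $X_G^2$ into the region $\A_k''$. You supply more detail on this sheet-swap than the paper does (the paper essentially just writes $\A_{\ell-(\ell-k)}''=\A_k''$ and refers to the figures), and your monodromy picture is the right way to see it.

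For the final assertion, however, you are working much harder than necessary, and your uniqueness argument via cyclic order and B\"ottcher coordinates, while plausible, is not fully justified as written (preserving cyclic order of arms and sending each star to the correct star does not by itself force pointwise agreement with $\phi$; you still need to know which arm goes where and that the map within each arm is the B\"ottcher conjugacy). The paper's argument is a one-liner: set $\hat\phi=\widehat{\Phi}_1|_{X_F^1\cup X_F^2}$ and observe
\[
G\circ\hat\phi \;=\; \Phi_1\circ F \;=\; \phi\circ F \;=\; G\circ\phi
\]
on $X_F^1\cup X_F^2$, using respectively the defining relation $G\circ\widehat{\Phi}_1=\Phi_1\circ F$, the fact that $\Phi_1$ agrees with $\phi$ on the stars, and the conjugacy equation for $\phi$. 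Since $G$ restricted to $X_G^1\cup X_G^2$ is a homeomorphism onto its image, one cancels $G$ on the left to obtain $\hat\phi=\phi$. This replaces your entire final paragraph.
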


\begin{proof}
  The first equality is clear since by construction we have $X_F^1 \subset \A_1$ and $X_G^1 \subset \A_1''$. Also by assumption we have $X_F^2 \subset \A_k$. So all we need to show is that $X_G^2 \subset \A_k''$. But this is precisely what is guaranteed by modifying $\Phi$ to $\Phi_1$ and thus $\widehat{\Phi}$ to $\widehat{\Phi}_1$.

  In passing, we note that the integer $k - \ell$ is equal to $1$ or $-1$ (or 0, if we are in the case where no Dehn twist is required). In either case, since $X_G^2 \subset \A_\ell'$, we must get, after applying the Dehn twist, that $X_G^2 \subset \A_{\ell-(\ell - k)}'' = \A_k''$. Hence $\widehat{\Phi}_1 (X_F^2) = X_G^2$.

\begin{figure}[ht]
\begin{center}

\input{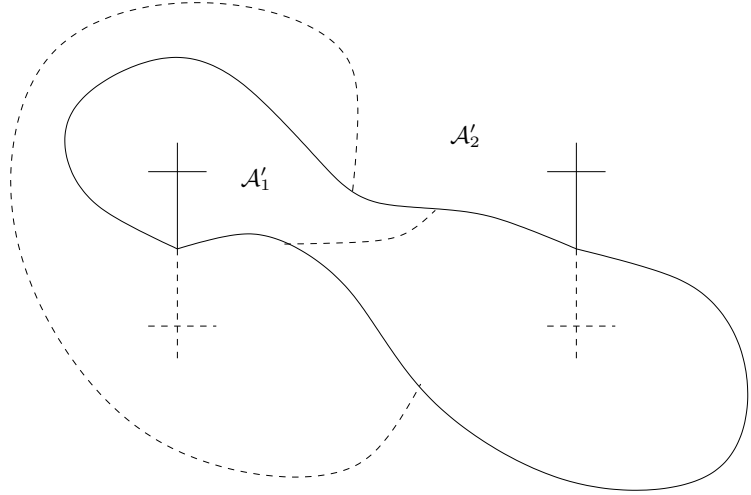}

\caption{Diagram for the proof of Lemma~\ref{l:starsequality}. $\A_i'$ is the image of $\A_i$ under $\widehat{\Phi}$. The dashed lines shows the effect of changing $\Phi$ to $D_{\tau} \circ \Phi$. Compare with Figure~\ref{f:twists3}.}
\label{f:twists}
\end{center}
\end{figure}	

\begin{figure}[ht]
\begin{center}

\input{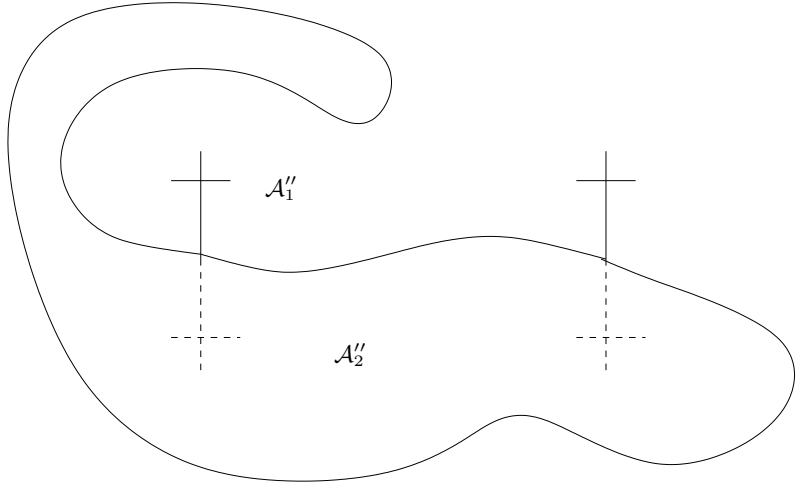}

\caption{The ``modified'' diagram from Figure~\ref{f:twists}, with the new regions $\A_i'' = \widehat{\Phi}_1(A_i)$ labelled.}
\label{f:twists3}
\end{center}
\end{figure}

It only remains to show $\widehat{\Phi}_1 |_{X_F^1 \cup X_F^2} = \phi$. But by construction we have
\begin{equation}\label{e:newphi}
 G \circ \widehat{\Phi}_1 = \Phi_1 \circ F.
\end{equation}
Define $\hat{\phi} = \widehat{\Phi}_1 |_{X_F^1 \cup X_F^2}$. Then we have
\begin{align*}
    G \circ \hat{\phi} 	=& \phi \circ F \quad \quad 	&\text{(by (\ref{e:newphi}))} \\
			=& G \circ \phi \quad \quad	&\text{(by (\ref{e:phi}))}
\end{align*}
on $X_F^1 \cup X_F^2$. Since $G$ is a homeomorphism on the stars, we get $\hat{\phi} = \phi$.
\end{proof}	

%Since $X_G^2 \subset \A_{\ell}'$, we see that the paths $\sigma_{\ell-1}$ and $\sigma_{\ell}$ are the paths coming into $c_2'$ adjacent to $X_G^2$. In other words the paths $\sigma_{\ell-1}$ and $\sigma_{\ell}$ bound the components $\A_{\ell}'$. From the discussion in the above paragraph, we see that the rays bounding $\B_k$ are described as follows. One starts at $c_1'$ and travels along $\sigma_{k-1}$ until it reaches $a_{k-1}$. It then follows a path from $a_{k-1}$ to $b_{k-1 + (\ell-k)} = b_{\ell-1}$ and then follows $\sigma_{\ell-1}$ to $c_2'$. The other begins at $c_1'$ and follows $\sigma_{k}$ to $a_{k}$. It then passes from $a_k$ to $b_{\ell}$ and then follows $\sigma_{\ell}$ and finally to $c_2'$. Notice that in particular this means that we must have $X_G^2 \subset \B_k$ and so we are done. \end{proof}

So we now have $\Phi_1$ and $\widehat{\Phi}_1$ agree on the stars. So we now compare the induced maps $\Psi_1$ and $\widehat{\Psi}_1$ from $A_F$ to $A_G$. Both these maps will equal $\psi$ on $\partial A_F$ and so $\widehat{\Psi}_1^{-1} \circ \Psi_1$ is a homeomorphism of $A_F$ which fixes the boundary pointwise. Since the mapping class group of the annulus is $\Z$, this homeomorphism is isotopic to $D^{\circ k}$ for some $k \in \mathbb{Z}$, where $D$ is the Dehn twist around the (anticlockwise) core curve of the annulus $A_F$. We can think of this core curve $C$ as being the pre-image under $\eta_F$ of some curve $\kappa$ separating the stars in the $F$-sphere. Note that the curve $\kappa' = F^{-1}(\kappa)$ maps onto $\kappa$ by a two to one covering. Also, the curve $C' = \eta_F^{-1}(\kappa')$ is homotopic to the curve $C$. We are now ready to prove Theorem~\ref{Per2case}.

\begin{proof}[Proof of Theorem~\ref{Per2case}]
	We begin by remarking that the homeomorphisms $\Phi_1$ and $\widehat{\Phi}_1$ satisfy $G \circ \widehat{\Phi}_1 = \Phi_1 \circ F$ and agree on $X_F^1 \cup X_F^2$ (and hence on $P_F$). So all that remains is to modify them so that these two conditions are preserved and that furthermore they are isotopic to one another. This is equivalent to making sure some suitable modification of $\Psi_1$ and $\widehat{\Psi}_2$ are isotopic to one another. It should be borne in mind that the definitions of $\Phi_2$ and $\widehat{\Phi}_2$ rely on each other, since we require $\Phi_i \circ F = G \circ \widehat{\Phi}_i$ for $i = 1,2$. Hence modifying one will force the modification of the other.

	Since $\kappa'$ maps to $\kappa$ in a two to one covering, a Dehn twist around $C'$ will correspond to the second power of a Dehn twist around $C$, again in light of the fact that $\Phi_i \circ F = G \circ \widehat{\Phi}_i$ and because $F$ and $G$ are degree 2. Working on the annulus $A_F$, we define $\widehat{\Psi}_2 = \widehat{\Psi}_1 \circ D^{\circ (-k)}_{C'}$ and $\Psi_2 = \Psi_1 \circ D^{\circ (-2k)}_C$. Since $C$ and $C'$ are homotopic, the Dehn twist around them has the same effect on the element of the mapping class group, hence we drop the subscript from now on. So we calculate
\begin{align*}
 	\widehat{\Psi}^{-1}_2 \circ \Psi_2 \, 	=& \, D^{\circ k} \circ \widehat{\Psi}_1^{-1} \circ \Psi_1 \circ D^{\circ (-2k)} \\
						\cong& \, D^{\circ k} \circ D^{\circ k} \circ D^{\circ (-2k)} \\
						=& \, \mathrm{Id}.
\end{align*}
Hence $\widehat{\Psi}_2$ and $\Psi_2$ are isotopic on the annulus and hence the maps $\widehat{\Phi}_2$ and $\Phi_2$ which are obtained by passing forward onto the Riemann sphere (using the maps $\tilde{\eta}_F$ and $\tilde{\eta}_G)$ satisfy the conditions for the homeomorphisms in Thurston's theorem. Hence $F$ and $G$ are Thurston equivalent. \end{proof}

\subsection{Differences in the higher degree case}

We will now briefly explain why the above method will not work when the degree is strictly greater than 2. A probable counterexample to the statment is given in the next section. In the proof above, we would be able to follow the method of this proof in the general degree $d$ case right until the proof of Lemma~\ref{l:starsequality}. In other words, the problem occurs when we want to make the two homeomorphisms isotopic rel $P_F$. Thinking of $\Phi$ as the homeomorphism on the range and $\widehat{\Phi}$ as the homeomorphism on the domain, we see that if we carry out one Dehn twist in the domain, then we will need to carry out the Dehn twist $d$ times in the range. We require the use of Dehn twists to ``undo'' the difference between the two homeomorphisms. The problem occurs if the difference is not a multiple of $d-1$ (which it always will be in the case $d=2$, as in the proof above).

To give an example, suppose we are in the degree $4$ case and that we found  $\widehat{\Psi}_1^{-1} \circ \Psi_1$ was isotopic to $D$. Then, if we carry out $k$ twists in the domain we carry out $4k$ twists in the range, and we see that we can solve for the number of twists required to make  $\widehat{\Psi}_1^{-1}$ and $\Psi_1$ isotopic by solving the equation $1 = 4k - k = 3k$. But this does not have an integer solution, and so we cannot carry out a (power of a) Dehn twist to correct the discrepancy. Hence the above technique would not supply a proof of equivalence in the higher degree case.

\section{Counterexamples in the general period two case}\label{s:higher}

In this section, we give an example which suggests that the generalisation of Theorem~\ref{Per2case} to the higher degree case is not possible, which would provide a stark interest with the fixed cluster case given in Theorem~\ref{Fixedcase}. We will construct two degree three rational maps using the mating operation, which have the same combinatorial data are not (according to calculations) equivalent in the sense of Thurston. We will not define the mating operation here, the interested reader is directed towards \cite{Milnor:mating,TanShish,Matingspaper} and \cite{TanLei:1990} or  for more information. Let $f_1(z) = z^3 + c_1$ be the map corresponding to the parameter $c_1 = 0.209925\ldots + 1.09351\ldots i$, and let $f_2(z) = z^3 + c_2$ be the map corresponding to the parameter $c_2 = -0.243965\ldots + 1.32241\ldots i$. Also, let $g_1(z) = z^3 + c'_1$ be the map corresponding to the parameter $c'_1 = -0.209925\ldots + 1.09351\ldots i$, and let $g_2(z) = z^3 + c'_2$ be the map corresponding to $c'_2 =  0.601679\ldots - 0.684721\ldots i$. Equivalently, if the reader is familiar with specifying polynomials by the parameter rays landing at the principal root points of their hyperbolic components, we have $f_1$ corresponds to the angles $(11/80,19/80)$, $f_2$ to $(22/80,24/80)$, $g_1$ to $(21/80,29/80)$ and $g_2$ to $(71/80,73/80)$. See Figure~\ref{f:parammaps} for the positions of the maps in parameter space, and Figure~\ref{f:juliasets} for the Julia sets of the maps, along with the external rays which land on the points which will become the cluster points in the mating. Note that for $f_2$, the external rays shown land at non-prinicpal root points of the critical orbit Fatou components.

\begin{figure}[ht]
  \centering
    \subfigure[The position of the maps $f_1$ and $f_2$.]{\includegraphics[width=0.44\textwidth]{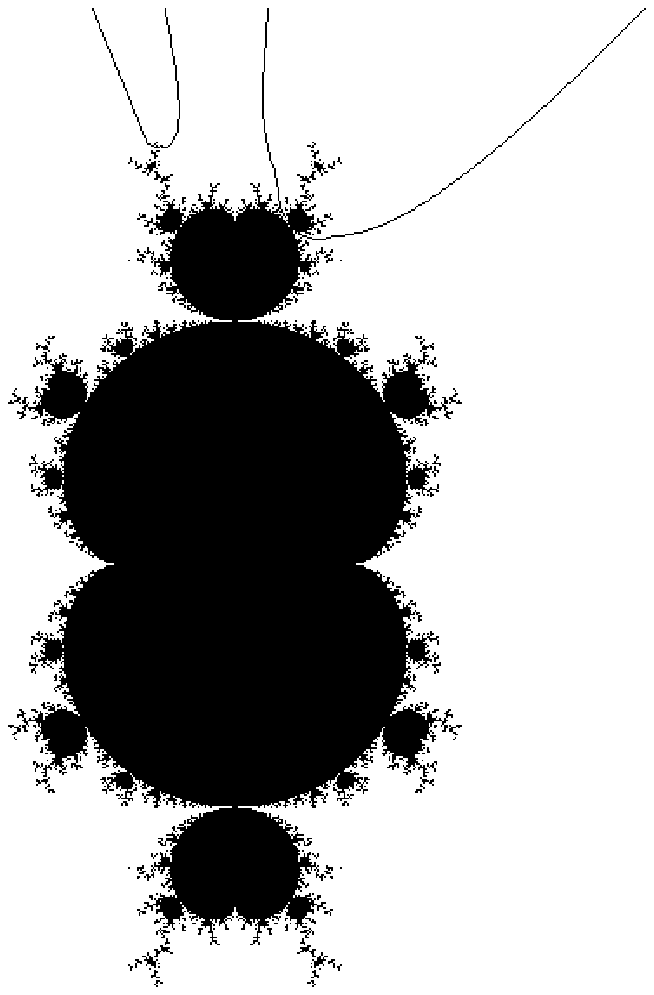}}
    \subfigure[The position of the maps $g_1$ and $g_2$.]{\includegraphics[width=0.44\textwidth]{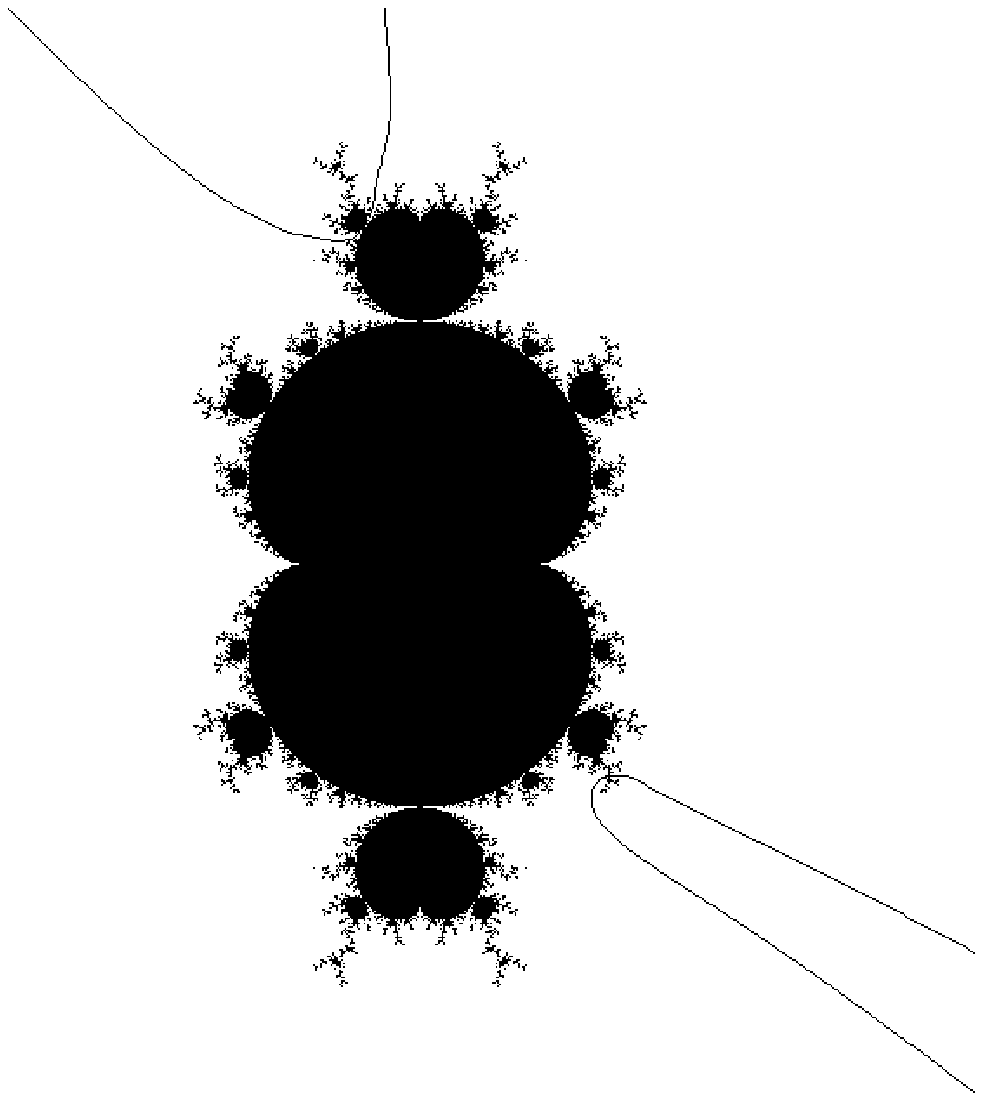}}
  \caption{Parameter space picture for the maps $f_1$, $f_2$, $g_1$ and $g_2$.}
  \label{f:parammaps}
\end{figure}

\begin{figure}[ht]
  \centering
    \subfigure[The Julia set of $f_1$.]{\includegraphics[width=0.43\textwidth]{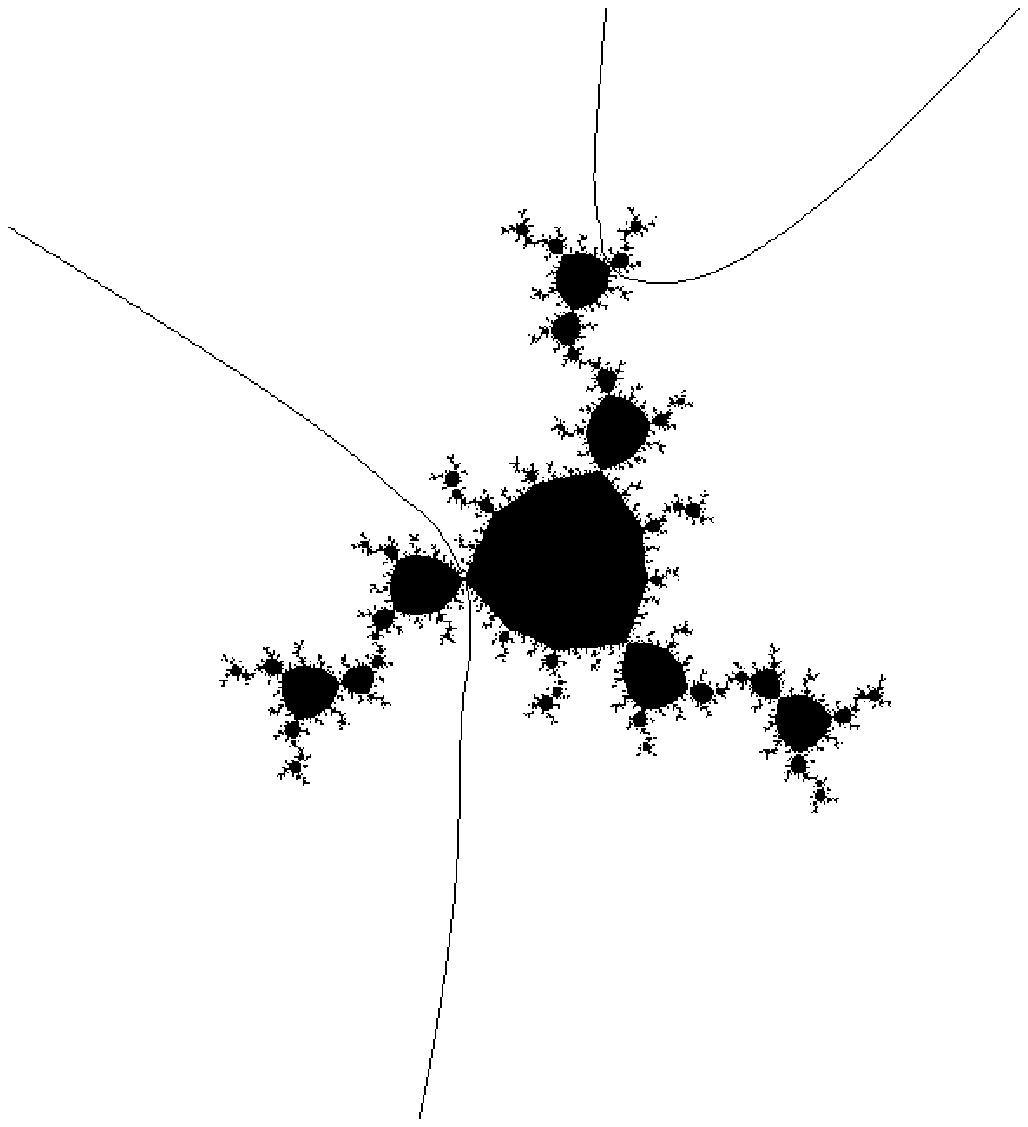}}
    \subfigure[The Julia set of $f_2$.]{\includegraphics[width=0.43\textwidth]{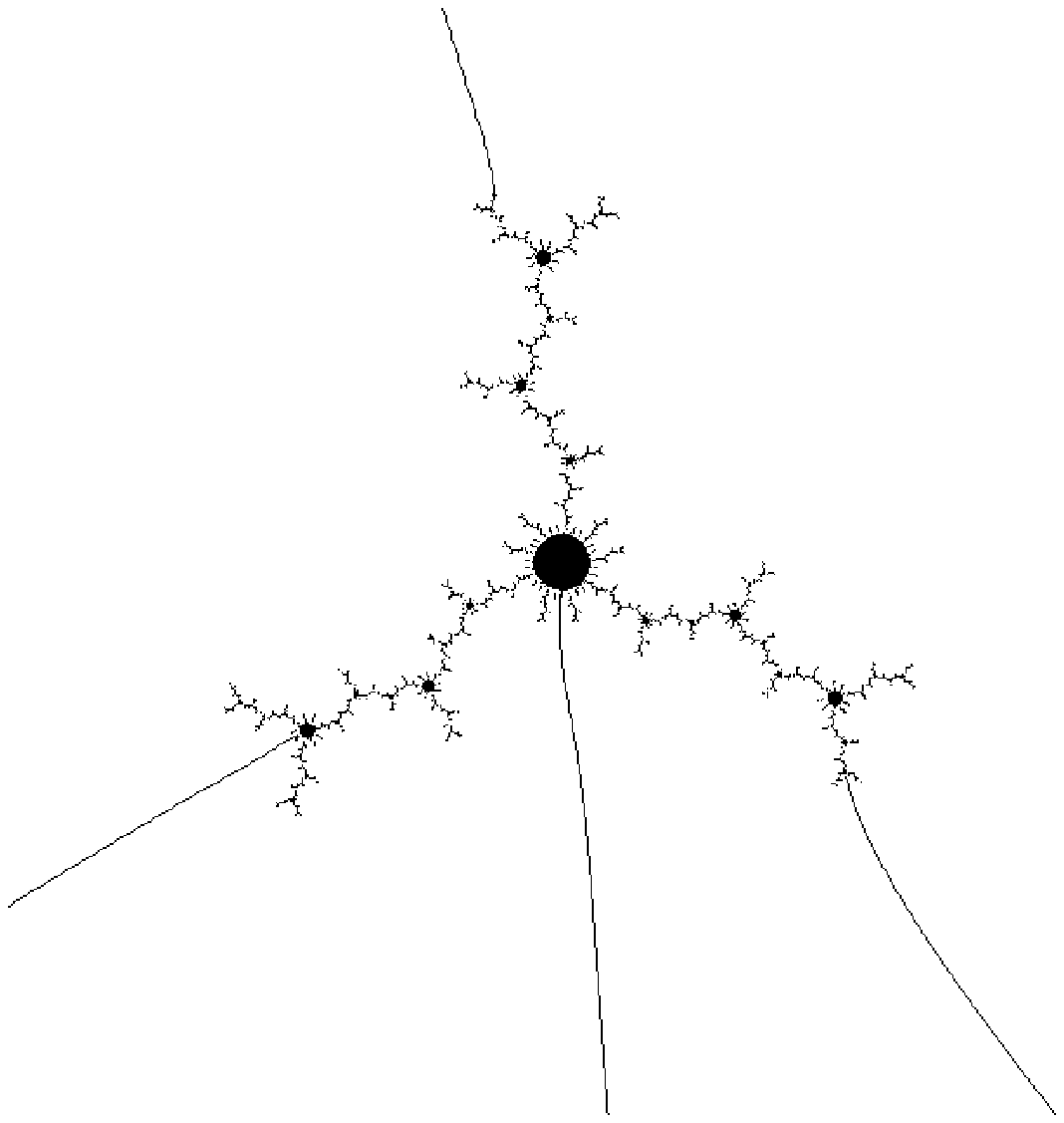}}
    \subfigure[The Julia set of $g_1$.]{\includegraphics[width=0.43\textwidth]{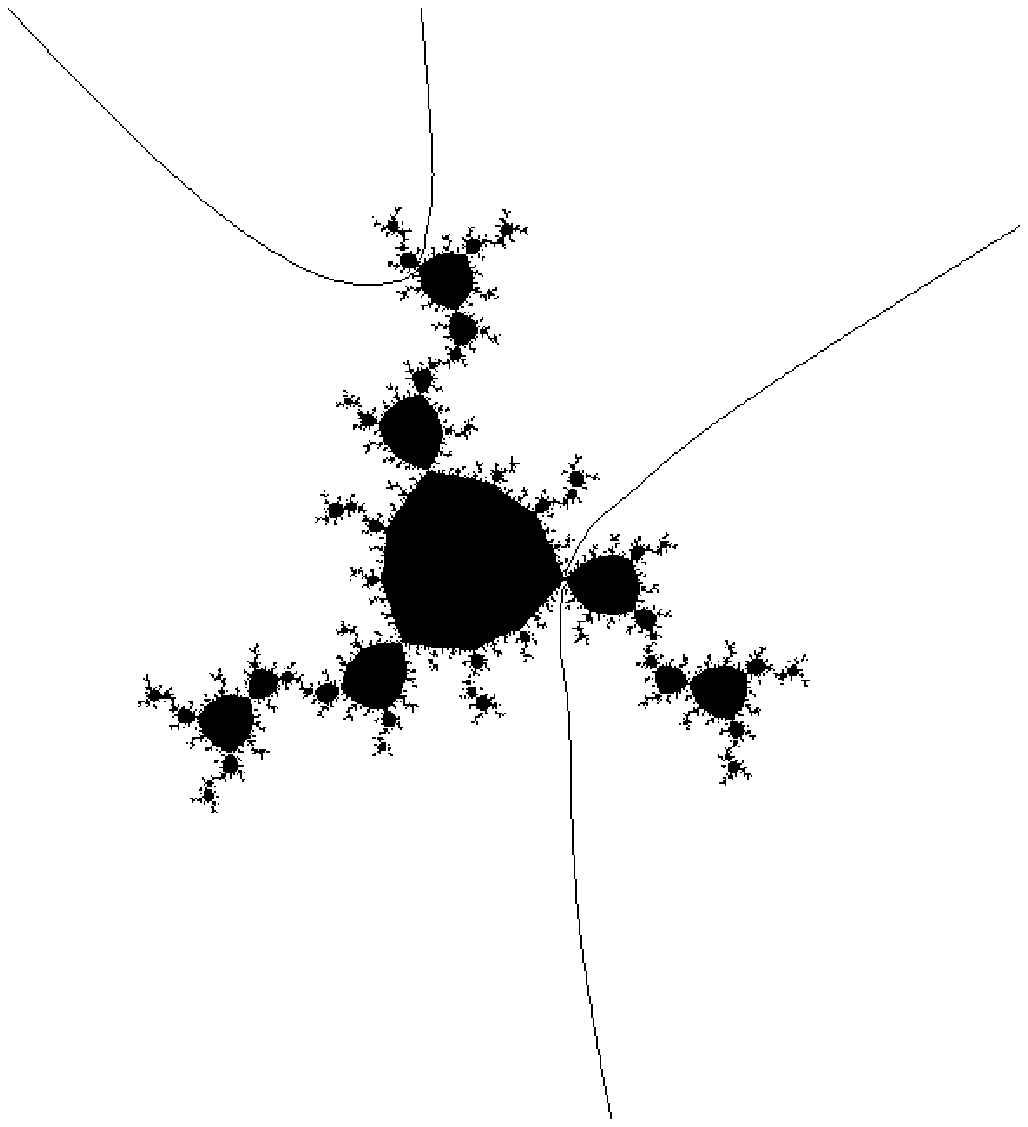}}
    \subfigure[The Julia set of $g_2$.]{\includegraphics[width=0.43\textwidth]{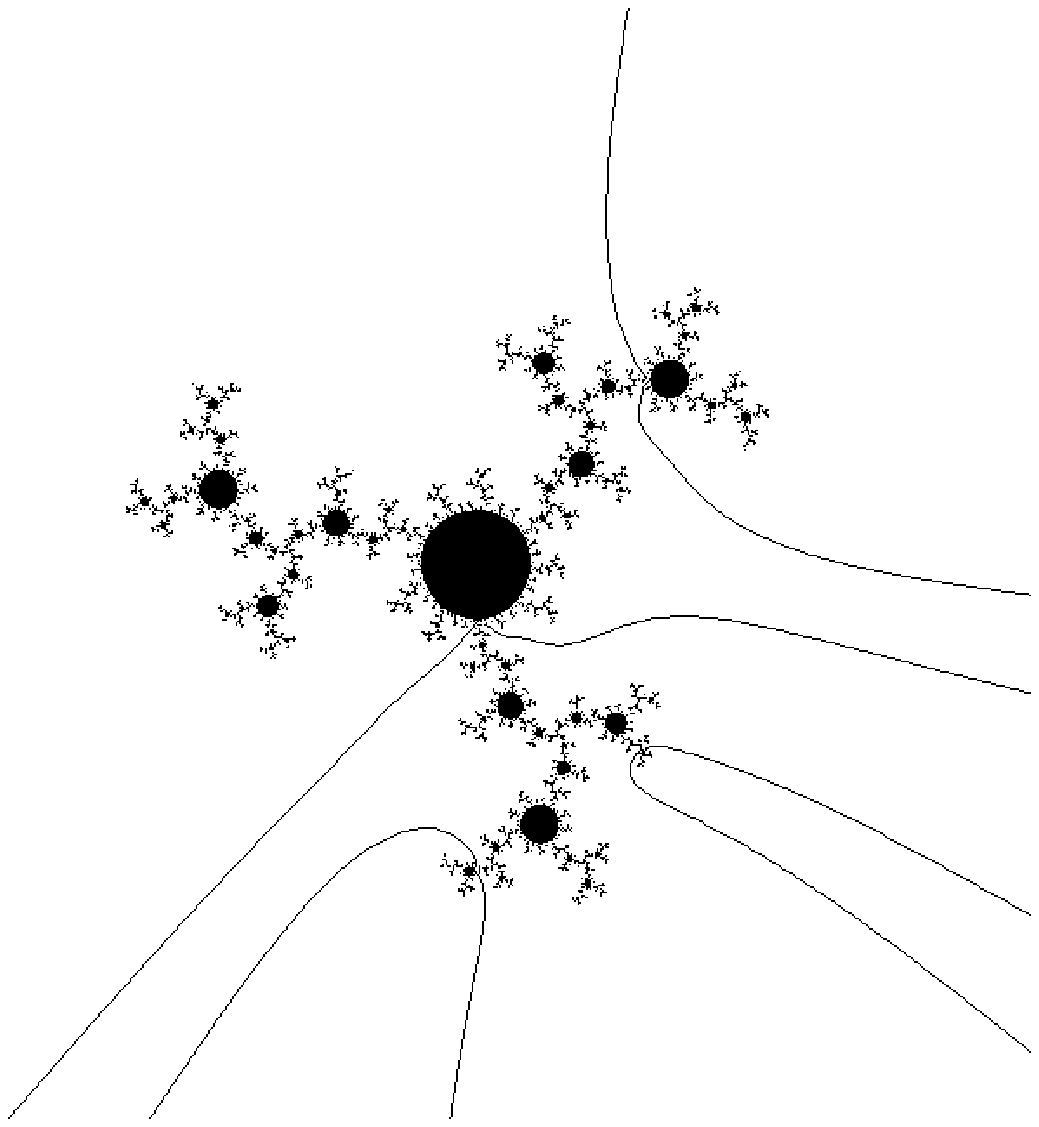}}
  \caption{The Julia sets of $f_1$, $f_2$, $g_1$ and $g_2$.}
  \label{f:juliasets}
\end{figure}

Now, construct the (topological) matings $F \cong f_1 \Perp f_2$ and $G \cong g_1 \Perp g_2$. Both the rational maps $F$ and $G$ have a period two cluster cycle which has rotation number $\rho = 1/2$ and critical displacement $\delta = 3$. However, these two maps are not Thurston equivalent. Calculations using the FR (written by Laurent Bartholdi) in GAP  suggest the two rational maps are given by
\[
  F(z) = \frac{(2.52260\ldots + 1.43040\ldots i) z^3 + 1}{(-4.31748\ldots-7.21673\ldots i)z^3 + 1}
\]
and
\[
  G(z) = \frac{(1.02505\ldots + 2.73636\ldots i) z^3 + 1}{(-6.43698\ldots+5.60985\ldots i)z^3 + 1}
\]
It is hoped that the confirmation that the maps $F$ and $G$ will be shown in a forthcoming paper. Furthermore, it is hoped we can find exactly what extra data is required (in terms of combinatorial data extrinsic to the cluster cycle) to get a complete classification of the general period two cluster case. 

\vspace{12pt}
\noindent\emph{Acknowledgements.} I would like to thank my PhD supervisor, Dr.~Adam Epstein, for all the help he has contributed towards my research and, in particular, this article. Furthermore, my thanks to Professor Mary Rees and Professor Anthony Manning for providing a number of useful suggestions in the preparation of this manuscript. This research was funded by a grant from EPSRC.

\bibliographystyle{amsalpha}
\bibliography{papers}

\end{document}